\documentclass{amsart}
\usepackage{amsmath,amsthm,amscd,amssymb}
\usepackage{mathrsfs}
\usepackage{graphicx}

\usepackage{hyperref}
\usepackage{xcolor}
\hypersetup{
colorlinks=green,
citebordercolor=green,
linkbordercolor=green,
urlbordercolor=green
}

\usepackage{enumerate}
\usepackage{color}
\usepackage{amsmath, amssymb,color,amsthm,graphicx}
\usepackage{lineno}

\theoremstyle{plain}
\newtheorem{thm}{Theorem}[section]

\newtheorem{lem}[thm]{Lemma}

\newtheorem{mtheorem}{Theorem}

\theoremstyle{definition}
\newtheorem{dfn}[thm]{Definition}

\newtheorem{remark}[thm]{Remark}
\numberwithin{equation}{section}
\numberwithin{figure}{section}

\def\bb{\mathbb{B}}
\def\vv{\mathbb{V}}
\def\gg{\mathbb{G}}

\def\hh{\mathbb{H}}
\def\rr{\mathbb{R}}
\def\bx{{\boldsymbol{x}}}
\def\by{{\boldsymbol{y}}}
\def\bv{{\boldsymbol{v}}}
\def\bu{{\boldsymbol{u}}}
\def\ba{{\boldsymbol{a}}}

\def\wh{\widehat}
\def\wt{\widetilde}
\def\too{\longrightarrow}
\def\vp{\varphi}
\def\ve{\varepsilon}
\def\ul{\underline}
\def\ol{\overline}

\def\cs{{\mathrm{cs}}}
\def\rs{{\mathrm{s}}}
\def\ru{{\mathrm{u}}}
\def\cu{{\mathcal{U}}}
\def\cf{{\mathcal{F}}}
\def\diff{\mathrm{Diff}}

\def\eset{\emptyset}
\def\la{\langle}
\def\ra{\rangle}

\title[$\boldsymbol{C^1}$-robust strong pluripotency for blender-horseshoes]{$\boldsymbol{C^1}$-robust  strong pluripotency for blender-horseshoes}

\date{\today}

\subjclass[2020]{Primary: 37C20, 37C29; Secondary: 37C25}

\keywords{Pluripotency, strong pluripotency, empirical measure, blender-horseshoe}

\author{Teruhiko Soma}
\address[Teruhiko Soma]{Department of Mathematical Sciences, Tokyo Metropolitan University, 
Minami-Ohsawa 1-1, Hachioji, Tokyo 192-0397, Japan}
\email{tsoma@tmu.ac.jp}

\author{Shuntaro Tomizawa}
\address[Shuntaro Tomizawa]{Graduate School of Mathematical Sciences, The University of Tokyo, 3-8-1 Komaba, Meguro, Tokyo, 153-8914, Japan}
\email{tomizawa-s@g.ecc.u-tokyo.ac.jp}

\subjclass[2020]{Primary: 37C20, 37C29; Secondary: 37C25}

\keywords{Pluripotency, strong pluripotency, empirical measure, blender-horseshoe}

\begin{document}

\maketitle

\begin{abstract}
Suppose that $M$ is a closed manifold of dimension greater than two.
We show that there exists a $C^1$-diffeomorphism $f_0:M\longrightarrow M$ with 
a wild affine blender-horseshoe $\Lambda_{f_0}$ 
such that any element $f$ of $\mathrm{Diff}^1(M)$ sufficiently $C^1$-close to $f_0$ is strongly pluripotent for the continuation $\Lambda_f^{(\mathrm{mj})}$ of $\Lambda_{f_0}^{(\mathrm{mj})}$, 
where $\Lambda_{f_0}^{(\mathrm{mj})}$ is the dense subset of $\Lambda_{f_0}$ 
consisting of elements with majority condition.
\end{abstract}

\section{Introduction}

The notion of pluripotency was introduced into dynamical systems in
\cite{KNS}. The term ``pluripotency'' is widely used in physiology and
related fields to describe the ability of a system to transition from an
undifferentiated state to multiple differentiated states governed by its
internal dynamics. In the setting of dynamical systems, pluripotency provides
a mechanism by which the behavior of orbits starting from a hyperbolic
invariant set can be realized stochastically and observably after arbitrarily
small perturbations of the dynamics.

Let $M$ be a closed manifold equipped with a Riemannian metric, and let
$\operatorname{Diff}^{r}(M)$, $r\geq 1$, denote the space of
$C^{r}$-diffeomorphisms of $M$ endowed with the $C^{r}$-topology. For
$f\in\operatorname{Diff}^{r}(M)$ and $x\in M$, we consider the sequence of
empirical measures
\begin{equation}
 \delta_{x,f}^{n}
 =
 \frac{1}{n}\sum_{i=0}^{n-1}\delta_{f^{i}(x)},
 \label{eqn_empirical}
\end{equation}
where $\delta_{f^{i}(x)}$ denotes the Dirac measure supported at $f^{i}(x)$.
Let $\mathcal{P}(M)$ be the space of Borel probability measures on $M$ with
the weak$^*$ topology. We say that the $f$-forward orbit
$(f^{n}(x))_{n\geq 0}$ has historic behavior if the sequence
$(\delta_{x,f}^{n})_{n\geq 1}$ has more than one accumulation point in
$\mathcal{P}(M)$. The notion of historic behavior was introduced by Ruelle
\cite{R01}. One of the main problems concerning historic behavior is
Takens' Last Problem \cite{T08}, which asks whether there exists a persistent
class of smooth dynamical systems for which the set of initial points with
historic behavior has positive Lebesgue measure.

A pioneering contribution to Takens' Last Problem was made by Colli and
Vargas \cite{CV01}. Their model consists of a two-dimensional diffeomorphism
$f$ with a wild horseshoe $\Lambda$. They showed that $f$ admits a wandering
domain $D$ such that every $x\in D$ has historic behavior. Here a wandering
domain is a non-empty connected open set $D$ such that
$f^{i}(D)\cap f^{j}(D)=\emptyset$ whenever $i,j\in\mathbb{Z}$ and $i\neq j$.
In \cite{KNS23}, the method of \cite{CV01} was adapted to wild
blender-horseshoes, yielding analogous phenomena for diffeomorphisms in
dimension three or higher. Further developments in various persistent
classes of smooth dynamical systems were obtained in
\cite{KS17,LR17,B22,BB23,JM24}.

Pluripotency describes orbit behavior in considerably greater detail than
historic behavior. We recall the precise definition in
Definition~\ref{dfn1}. Roughly speaking, for a prescribed point $x$ in a
hyperbolic invariant set, one seeks a perturbation of the dynamics and a
positive Lebesgue measure set of initial points whose statistical, or in the
strong version geometrical, orbit averages asymptotically follow those of
$x$. The two corresponding orbits need not remain close at all times: they
may separate repeatedly, while spending asymptotically dominant portions of
time close to each other; see \cite[Proposition~1.3]{KNS}. This behavior is
formulated in terms of describability in Definition~\ref{describable}.
Although the terminology of pluripotency was not available at the time, the
two-dimensional model of \cite{CV01} and the higher-dimensional model of
\cite{KNS23} can be viewed retrospectively from this perspective.

We now turn to robustness of strong pluripotency. In the $C^{r}$-category,
$r\geq 2$, robust strong pluripotency for the Colli--Vargas model was
established in \cite{KLNSV}: there exists a neighborhood $\mathcal{U}_{0}$
of the model $f_{0}$ such that every $f\in\mathcal{U}_{0}$ is strongly
pluripotent for the continuation $\Lambda_{f}$ of the horseshoe
$\Lambda_{f_{0}}$. In higher dimension, \cite[Theorem~B]{KNS} gives a
corresponding robust result for the wild affine blender-horseshoe introduced
in \cite{KNS23}. In that case, however, strong pluripotency is obtained not
for the whole horseshoe $\Lambda_{f}$ but for the dense subset
$\Lambda_{f}^{(\mathrm{mj})}$ consisting of points satisfying the majority
condition, see Section~\ref{S_Pluri}.

Recent work \cite{KLNS} shows that the restriction to a proper subset is
essential in the present affine blender-horseshoe setting: for a model of the
same type, there is a $C^{r}$-neighborhood, already for $r\geq 1$, in which
no element is strongly pluripotent for the whole horseshoe. Thus the
obstruction to full strong pluripotency is already present in the
$C^{1}$-category. On the other hand, all positive robustness results for
strong pluripotency mentioned above require $r\geq 2$.

The purpose of the present paper is to show that this regularity restriction
can be removed for the positive result on
$\Lambda_{f}^{(\mathrm{mj})}$. More precisely, we prove that the wild affine
blender-horseshoe model of \cite{KNS23} admits a $C^{1}$-neighborhood
$\mathcal{U}_{0}$ such that every $f\in\mathcal{U}_{0}$ is strongly
pluripotent for $\Lambda_{f}^{(\mathrm{mj})}$. To the best of our knowledge,
this gives the first example of a $C^{1}$-open family of diffeomorphisms
exhibiting strong pluripotency on a dense subset of a wild
blender-horseshoe.

Although several parts of the proof are parallel to the $C^{r}$ argument,
$r\geq 2$, in \cite{KNS}, a substantial difficulty appears in the
$C^{1}$-category. The proof in \cite{KNS} uses curvature to control a
sequence $(D_{k})_{k\geq j_{0}}$ of pairwise disjoint solid cylinders
satisfying
\[
 g^{\widehat{n}_{k}+1}(D_{k})\subset D_{k+1}\quad\text{for $k\geq j_{0}$}.
\]
This immediately implies that $\operatorname{Int}D_{j_{0}}$ is a wandering
domain of $g$; see Figure~\ref{f_C1Cr}(a), as well as
\cite[Figure~11]{KLNSV} and \cite[Figure~9.3]{KNS}. 
\begin{figure}[hbtp]
\centering
\scalebox{0.6}{\includegraphics[clip]{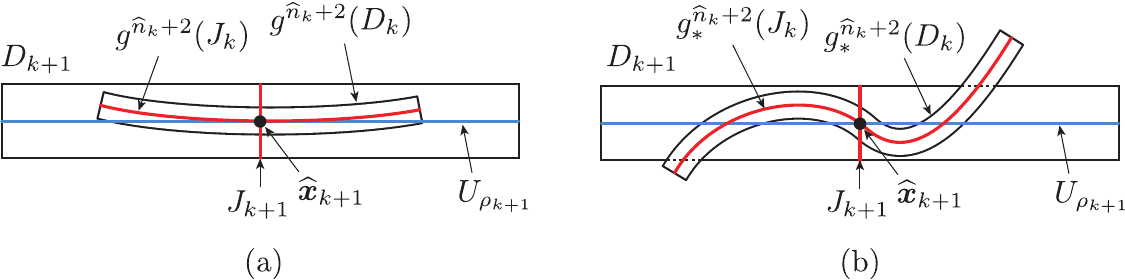}}
\caption{(a) The case  of $C^r$ $(r\geq 2)$.\quad (b) The case of $C^1$.\\
In both cases, $J_{k+1}$ is the core of the solid cylinder $D_{k+1}$ and $U_{\rho_{k+1}}$ is the disk 
centered at $\wh\bx_{k+1}$ of radius $\rho_{k+1}$.}
\label{f_C1Cr}
\end{figure}
In the
$C^{1}$-category, however, curvature is no longer available, and one cannot
exclude the configuration illustrated in Figure~\ref{f_C1Cr}(b). The main
new ingredients used to overcome this difficulty are an estimate of the
deviation of invariant foliations, developed in
Subsection~\ref{ss_deviation}, and a new perturbation procedure by pressing
operations, introduced in Section~\ref{S_second_perturb}.

\section{Pluripotency and main theorem}\label{S_Pluri}

As in Introduction, let $M$ be a closed manifold and $r\geq 1$.
We consider the \emph{first Wasserstein metric} $d_W$ on $\mathcal{P}(M)$ defined as
\[
d_W(\mu ,\nu )= \sup_\varphi \left\vert \int _M \varphi \, d\mu - \int _M \varphi \, d\nu \right\vert
\]
for $\mu$, $\nu\in \mathcal{P}(M)$, 
where the supremum is taken for all Lipschitz functions $\varphi : M \longrightarrow [-1, 1]$ 
with Lipschitz constant bounded by $1$. 
Note that the metric $d_W$ is compatible with the weak$^*$-topology on $\mathcal{P}(M)$.
See \cite{V09} for properties of the metric $d_W$.

Now we recall the concepts of (strong) pluripotency introduced in \cite{KNS}.

\begin{dfn}[Pluripotency]\label{dfn1}
Let $\Lambda_f $ be a uniformly hyperbolic invariant set of $f \in \diff^{r}(M)$.
\begin{enumerate}[(1)] 
\item $f$  is \emph{pluripotent} for  a subset $\Lambda_f'$ of $\Lambda_f$ if, for any $x\in \Lambda_f'$,  
there exist $g \in \diff^r(M)$ arbitrarily $C^r$-close to $f$ and 
a positive Lebesgue measure subset $D_g$ of $M$ such that, 
for any $y\in D_g$, 
\begin{equation}\label{defpl}
\lim _{n\to \infty} d_W( \delta_{y,g}^n, \delta_{x_g,g}^n ) =0,
\end{equation}
where $x_g\in \Lambda_g'$ is the continuation of $x\in \Lambda_f'$. 
\item
$f$ is \emph{strongly pluripotent} for $\Lambda_f'$ if the 
following condition \eqref{defspl} holds instead of \eqref{defpl} for $g$ and $D_g$ as above.
\begin{equation}\label{defspl}
\lim _{n\to \infty} 
\frac{1}{n}\sum_{i=0}^{n-1}
\sup_{y\in D_{g}}\mathrm{dist}(g^i(y),g^i(x_{g}))=0.
\end{equation}
\end{enumerate}
\end{dfn}
We note that \eqref{defspl} implies \eqref{defpl}, while the converse is not true in 
general.
For example, see \cite[Theorem 1.8]{KNS}.

In this paper we consider the case of $\dim M\geq 3$.
In \cite[Theorem B]{KNS}, we showed that, if $2\leq r<\infty$,  
there exists a $C^r$-diffeomorphism $f_0$ on $M$ with a wild affine blender-horseshoe 
which has a neighborhood $\mathcal{U}_0$ in $\diff^r(M)$ such that 
any element $f$ of $\mathcal{U}_0$ is strongly pluripotent for 
the subset $\Lambda_f^{(\rm mj)}$ of 
$\Lambda_f$ consisting of elements with the majority condition.
Here we say that an element $x$ of $\Lambda_f$ satisfies the \emph{majority condition} if 
\begin{equation}\label{eqn_majority}
\liminf_{n\to\infty}\frac{
\#\left\{
j\ ;\ 
n- (3n)^{2/3} \leq j\leq n,\ 
v_{j}=0
\right\}
}{(3n)^{2/3}}\geq \frac12,
\end{equation}
where 
$(v_j)_{j\in \mathbb{Z}}\in \{0,1\}^{\mathbb{Z}}$ is the binary code corresponding to $x$.

\medskip

The following is the main theorem of this paper.

\begin{mtheorem}[$C^1$-robustness of strong pluripotency]\label{mainthm}
Let $M$ be a closed manifold of dimension $\geq 3$.
Then there exist an element $f_{0}$ of $\diff^{1}(M)$ having a wild blender-horseshoe $\Lambda_{f_0}$ 
and an open neighborhood $\cu_0$ of $f_0$ in $\diff^{1}(M)$ such that  every element $f$ of $\mathcal{U}_0$ is strongly pluripotent for the continuation $\Lambda_{f}^{(\mathrm{mj})}$ of $\Lambda_{f_0}^{(\mathrm{mj})}$.
That is, for any $\bx\in \Lambda_f^{(\mathrm{mj})}$, there exist a diffeomorphism $g$ on $M$ arbitrarily $C^1$-close to $f$ and  a positive Lebesgue measure subset $D_{g}$ of $M$ such that \eqref{defspl} holds for the continuation $\bx_{g}\in \Lambda_{g}^{(\mathrm{mj})}$ of $\bx$.
\end{mtheorem}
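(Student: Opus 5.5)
We follow the scheme of \cite[Theorem~B]{KNS} and replace every step that used $C^r$-regularity with $r\geq 2$ by a $C^1$-robust substitute. First we fix the model $f_0$: an affine blender-horseshoe $\Lambda_{f_0}$ in a cube $\mathbb{B}$, with a $C^1$-robust homoclinic tangency between $W^{\rm s}(\Lambda_{f_0})$ and $W^{\ru}(\Lambda_{f_0})$ provided by the blender property. This tangency is a folding of an unstable disk over a stable $\cs$-strip. The $C^1$-neighborhood $\cu_0$ is chosen so that the continuation $\Lambda_f$ of $\Lambda_{f_0}$, its binary coding, and the $C^1$-continuations of the stable/unstable cone fields and of the invariant (center-)stable and unstable foliations exist throughout $\cu_0$, with uniform contraction and expansion rates.

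Fix $f\in\cu_0$ and $\bx\in\Lambda_f^{(\mathrm{mj})}$ with code $(v_j)$. Mimicking \cite{KNS}, we choose a rapidly increasing sequence of times $\wh n_k$, with gaps of order $(3n)^{2/3}$. At each stage $k$ we pick a point $\wh\bx_k$ near the tangency whose forward orbit first shadows $f^{i}(\bx)$ for most of the block, and then, via the tail coding, returns to the tangency region. Here the majority condition \eqref{eqn_majority} is used. On the last $(3n)^{2/3}$ symbols before $n$, at least half are $0$. This lets us pick the return itinerary so that the blender's $\cs$-direction is controlled (the $0$-branch contracts the central coordinate suitably), and the return lands in the prescribed strip. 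Since the shadowing fraction tends to $1$, the Cesàro average in \eqref{defspl} tends to $0$, provided we can also keep a positive measure set $D_g$ following $\wh\bx_k$ at each stage.

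The first perturbation is done by small $C^1$ translations supported in disjoint small neighborhoods of the tangency points, with sizes summable and tending to $0$. It makes the image of a disk $U_{\rho_k}$ centered at $\wh\bx_k$ land near $\wh\bx_{k+1}$. It is $C^1$-small because the supports shrink only geometrically while the displacements shrink faster, so their ratio goes to $0$. This yields $g$ with $g^{\wh n_k+1}(\wh\bx_k)$ near $\wh\bx_{k+1}$.

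The main obstacle is the one described in the Introduction. Without curvature we cannot guarantee that $g^{\wh n_k+1}(D_k)\subset D_{k+1}$ for solid cylinders $D_k$, as in Figure~\ref{f_C1Cr}(b): the image of a cylinder may be sheared across the next one. We plan two remedies. First, we bound the deviation of the $C^1$ invariant foliations, i.e.\ the angle and displacement between leaves through nearby points, by a quantity decaying with the depth of shadowing. This shows that $g^{\wh n_k+1}(D_k)$ is contained in a thin neighborhood of the leaf of the unstable foliation through $\wh\bx_{k+1}$, with thickness much smaller than $\rho_{k+1}$. Second, we perform a second perturbation by \emph{pressing} maps, supported in small balls around $g^{\wh n_k}(D_k)$. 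Each such map is a $C^1$-small map that pushes the image toward the core $J_{k+1}$ in the contracting directions, with $C^1$ size controlled by the ratio of the displacement to $\rho_{k+1}$, which tends to $0$. After pressing, the nesting $g^{\wh n_k+1}(D_k)\subset D_{k+1}$ holds for $k\geq j_0$. Then $D_g:=D_{j_0}$ has positive measure, and every $y\in D_g$ stays within $\rho_k$-scale of the orbit of $\bx_g$ except on a set of times of density zero, which gives \eqref{defspl}. Finally, we check that all perturbations are supported away from $\Lambda_f$ and can be made arbitrarily $C^1$-small, so that $\bx_g$ is the continuation of $\bx$ and $g$ is arbitrarily $C^1$-close to $f$.
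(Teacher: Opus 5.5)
Your outline has the right ingredients: a translation-only first perturbation, deviation of foliations, pressing, nested cylinders and a density count. However, the two steps that carry the $C^1$ argument are described so that they would not give $g^{\wh n_k+2}(D_k)\subset D_{k+1}$.

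\textbf{The majority condition is used in the wrong place.} The return to the tangency region after copying $\ul u_k$ is supplied by Lemma~\ref{lem-3-1} for an \emph{arbitrary} $\ul u_k$ (Remark~\ref{r_free_u}). That is where the blender inequality $\ul\lambda_{\rm cs0}+\ul\lambda_{\rm cs1}>1$ enters, and \eqref{eqn_majority} plays no role there.

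What \eqref{eqn_majority} is needed for is the thickness estimate of Lemma~\ref{lem7.3}. It gives $\wh n_{k(1)}<(1+\eta_0)\wh n_{k(0)}$. Combined with $\ol\lambda_{\rm cs0}\ol\lambda_{\rm cs1}\ol\lambda_{\ru}^{\,4}\ul\lambda_{\ru}^{-2}<1$ and the specific scales $\xi_k$, $\rho_k=\sigma^{-1}\xi_k^{1/2}$ of \eqref{eqn_xi_zeta}, this shows that $g_*^{\wh n_k+2}(U_{\rho_k}(\bx))$ has diameter $o\bigl(\xi_{k+1}(\ol\lambda_{\ru}^{\,-1}\ul\lambda_{\ru})^{\wh n_{k+1}}\bigr)$. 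In other words, the images of the stable disks are small compared with the height of $D_{k+1}$ (Lemma~\ref{l_xiJk}).

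Your proposal never compares these contracted stable disks with the height of $D_{k+1}$. No $C^1$-small pressing can make up for this, since a map $C^1$-close to the identity cannot shrink those pieces.

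\textbf{The geometry at the return is reversed.} Because of the fold $f^2|_{\hh_{\ve_0}}$, the unstable core $J_k$ is sent to a curve of length $<\rho_{k+1}/3$ by \eqref{eqn_gJ}. This curve lies nearly in the \emph{stable} leaf $F^{\rs}(\wh\bx_{k+1})$, i.e. along the cross-section $U_{\rho_{k+1}}(\wh\bx_{k+1})$ and transverse to $J_{k+1}$. It is not near the unstable leaf through $\wh\bx_{k+1}$.

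Since $\cf_f^{\rs}$ is only a $C^0$ foliation with $C^1$ leaves, Lemma~\ref{l_e*} gives $\ve_k\to 0$ with no rate. So the deviation bound only places this curve within $\frac13(2c_0+1)\ve_k\rho_{k+1}$ of $F^{\rs}(\wh\bx_{k+1})$, measured in the \emph{expanding} direction. The relevant comparison is with the height $\xi_{k+1}=\sigma^2\rho_{k+1}^2$ of $D_{k+1}$, not with $\rho_{k+1}$. One cannot expect $\ve_k\rho_{k+1}\lesssim\xi_{k+1}$; this is exactly the situation of Figure~\ref{f_C1Cr}(b). So ``thickness much smaller than $\rho_{k+1}$'' does not give nesting.

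Hence the pressing must act along the $x$-direction, $(x,y,z)\mapsto(x-\wh\eta(z),y,z)$ with $|\wh\eta'|\leq(2c_0+1)\ve_k$, flattening $g_*^{\wh n_k+2}(J_k)$ into $U_{\rho_{k+1}}(\wh\bx_{k+1})$. The displacement, of order $\ve_k\rho_{k+1}$, exceeds the height of $D_{k+1}$. Therefore the support cannot be $D_{k+1}$ or a small ball. It must be the taller cylinder $\wh D_{k+1}$ built over the whole leaf $l(\wh\bx_{k+1})$, whose half-height $\delta_{k+1}$ is much larger than $\sqrt{\ve_k}\rho_{k+1}$ (cf. \eqref{eqn_delta_rho}). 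Then a cut-off with transition width $\sqrt{\ve_k}\rho_{k+1}$ fits inside, and the $C^1$-size of the pressing map is $O(\sqrt{\ve_k})$.

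Pushing ``toward the core $J_{k+1}$ in the contracting directions,'' with the size measured against $\rho_{k+1}$, addresses the wrong direction and the wrong scale.
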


Since the pressing operation used in the proof of Theorem \ref{mainthm} can be applied also to a $C^1$-neighborhood of the Colli-Vargas model, we do not need arguments based on curvature there.
Therefore the reader might expect that the following problem, posed in \cite{KLNSV}, can be proved as in the case of blender-horseshoe.

\medskip
\emph{
Is there a two-dimensional diffeomorphism which is $C^1$-robustly (strongly)
pluripotent for a horseshoe?
}
\medskip

In contrast to the higher-dimensional case, in dimension two, the process of proving strong pluripotency needs the fact that the stable and unstable thicknesses of the horseshoe vary continuously in a neighborhood of the Colli--Vargas model $f_0$.
However, in the $C^1$-category, Moreira \cite{M11} proved that the continuity of thickness fails.
Therefore the above problem remains open.

\section{Describability and Pluripotency Lemma}\label{S_describable}

In this section, we present the practical necessary and sufficient condition for a diffeomorphism $f$ on 
$M$ to be strongly pluripotent.

A pair $\{\mathbb{U}_0,\mathbb{U}_1\}$ of connected open sets in $M$ with mutually disjoint closures 
 is called a  \emph{coding pair} of a horseshoe $\Lambda_f$ of $f$ if 
\[
\Lambda_f =\bigcap_{i\in \mathbb{Z}} f^{i}(\mathbb U_0 \cup \mathbb U_1)
\]
and the restriction $f|_{\Lambda_f}:\Lambda_f\longrightarrow \Lambda_f$ is topologically conjugate 
to the shift map on $ \{ 0,1\} ^{\mathbb Z}$ by the coding map  
$\mathcal{I}_f: \Lambda_f  \longrightarrow  \{ 0,1\} ^{\mathbb Z}$ defined as 
\[
\left(\mathcal{I}_f(x) \right)_j = v \quad \text{if $f^j(x) \in \mathbb U_{v}$}, 
\]
where $(\mathcal{I}_f(x) )_j$ denotes the $j$-th entry of $\mathcal{I}_f(x)$.

\begin{dfn}[Describability]\label{describable}
Let $\Sigma$ be a subset of $\{0,1\}^{\mathbb N_0}$ and 
$f$ an element of $\mathrm{Diff}^{r}(M)$ with a horseshoe $\Lambda_f$ associated with a coding pair $\{ \mathbb U_0, \mathbb U_1 \}$, 
where $\mathbb N_0=\mathbb{N}\cup \{0\}$.  
We say that $f$ is $\Sigma$-\emph{describable} over $\Lambda_f$  
if any element $\underline{v}=(v_{0}v_{1}v_{2}\ldots\,)$ of $\Sigma$
satisfies the following conditions: 

\begin{enumerate}[({D}1)]
\makeatletter
\renewcommand{\p@enumi}{D}
\makeatother
\item\label{D1}
There exist a strictly increasing sequence $(\alpha _k)_{k\in \mathbb N}$ of non-negative integers and 
integers $\beta _k$ $(k\in \mathbb N)$ with $0\leq \beta _k\leq \alpha _{k+1}-\alpha _k-1$ and  
such that 
\[
\lim _{n\rightarrow \infty} \frac{\#\left\{j\,;\, 0\le j \le n-1,\ j \in \bigcup _{k=1}^\infty \mathbb{I}_k\right\}}{n} =1, 
\]
where $\mathbb{I}_k
=[\alpha_k, \alpha _k+ \beta_k] \cap \mathbb Z$.
\item\label{D2}
There exist an element $g$ of $\mathrm{Diff}^{r}(M)$ arbitrarily $C^r$-close to $f$ and  
a positive Lebesgue measure subset $D_g$ of $M$  
such that  
\[
g^j (D_g) \subset \mathbb U_{v_{j}}
\]
for any  $j\in \bigcup_{k\in \mathbb{N}
} \mathbb I_k$.  
\end{enumerate}
\end{dfn}

The following theorem (Pluripotency Lemma) is often used to decide whether a given dynamical system is strongly pluripotent. 
Although we do not directly cite this theorem in this paper, some argument in Subsection \ref{ss_proof_thmA} is based on ideas from its proof.

\begin{thm}[{\cite[Theorem A]{KNS}}]\label{p-lemma}
Suppose that $f$ is an element of $\mathrm{Diff}^r(M)$ $(r\geq 1)$ with a horseshoe $\Lambda_f$ associated with a coding pair $\{ \mathbb U_0, \mathbb U_1 \}$ and 
$\Sigma$ is a non-empty subset of $\{0,1\}^{\mathbb N_0}$.
Then $f$ is $\Sigma$-describable if and only if 
$f$ is strongly pluripotent for $\mathcal{I}_f^{-1}(\widehat \Sigma)$.
\end{thm}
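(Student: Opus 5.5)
The statement has two directions. I would prove both by comparing the coding of the orbit of $x_g$ with that of the points of $D_g$, using only uniform hyperbolicity of the continuation $\Lambda_g$ and the fact that $\{\mathbb U_0,\mathbb U_1\}$ is a coding pair. Here I read $\widehat\Sigma$ as the set of bi-infinite sequences whose non-negative half lies in $\Sigma$. I also use that the continuation $x_g$ has the same code as $x$, since $\mathcal I_g(x_g)=\mathcal I_f(x)$ for all $g$ close to $f$.

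\emph{Strong pluripotency $\Rightarrow$ describability.} Fix $\underline v\in\Sigma$ and choose $x\in\mathcal I_f^{-1}(\widehat\Sigma)$ whose forward code is $\underline v$. Choose $\varepsilon_0>0$ such that the $\varepsilon_0$-neighbourhood of each piece $\Lambda_g\cap\mathbb U_v$ lies in $\mathbb U_v$. This can be done uniformly for $g$ near $f$, because the closures of $\mathbb U_0,\mathbb U_1$ are disjoint and $\Lambda_g$ varies continuously.
\begin{enumerate}
\item Take $g$ and $D_g$ from \eqref{defspl}, and set $J=\{j\ge 0\,;\,\sup_{y\in D_g}\mathrm{dist}(g^j(y),g^j(x_g))<\varepsilon_0\}$.
\item By a Markov/Chebyshev estimate applied to \eqref{defspl}, $J$ has density one in $\mathbb N_0$.
\item Decompose $J$ into its maximal integer intervals $[\alpha_k,\alpha_k+\beta_k]$. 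Then (D1) holds, and (D2) holds because $g^j(x_g)\in\mathbb U_{v_j}$.
\end{enumerate}
There is one bookkeeping point. The intervals $\mathbb I_k$ now depend on $g$, whereas in Definition~\ref{describable} they are introduced before $g$. I would either read the definition with this order of quantifiers, or fix a sequence $g_m\to f$ and take intervals valid for each $m$ separately.

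\emph{Describability $\Rightarrow$ strong pluripotency.} Take $x$ with forward code $\underline v\in\Sigma$, and take $\mathbb I_k$, $g$, $D_g$ from (D1) and (D2). The key tool is a shadowing/expansivity estimate for the locally maximal hyperbolic set $\Lambda_g$. There are $C>0$ and $\lambda\in(0,1)$, uniform for $g$ near $f$, with the following property: if $z,w$ satisfy $g^i(z),g^i(w)\in\mathbb U_{v_i}$ for all $i\in[a,b]$, then
\[
\mathrm{dist}(g^j(z),g^j(w))\le C\lambda^{\min\{j-a,\;b-j\}}\qquad (a\le j\le b).
\]
This is the standard cone-field argument: the stable and unstable cones contract and expand along itineraries that stay in the coding pair. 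Applying it with $w=x_g$ and $z=y\in D_g$ bounds the contribution of each interval $\mathbb I_k$ to $\sum_j\sup_y\mathrm{dist}$ by $\beta_k\,\eta+O(\log(1/\eta))$ for any $\eta>0$. The indices outside $\bigcup_k\mathbb I_k$ contribute at most $\mathrm{diam}(M)$ times a set of density zero.

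\emph{Main obstacle.} The estimate above gives \eqref{defspl} only if the number of intervals $\mathbb I_k$ meeting $[0,n)$ is $o(n)$. Condition (D1) does not obviously exclude many short intervals: inside a short interval, both points lie in $\mathbb U_{v_j}$ but need not be close. My first attempt would be to pass to a refined coding pair $\{\mathbb U'_0,\mathbb U'_1\}$ consisting of small neighbourhoods of $\Lambda_f\cap\mathbb U_v$, so that membership in the same $\mathbb U'_{v}$ already forces distance at most $\eta$. Running (D2) with this refined pair, which is what the arguments in \cite{KNS} actually provide, would make short intervals harmless and close the argument. Failing that, I would modify $g$ by a further small perturbation that merges nearby intervals, so that $\#\{k\,;\,\alpha_k<n\}=o(n)$.
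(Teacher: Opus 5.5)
Your first direction (strong pluripotency $\Rightarrow$ describability) is fine. The Chebyshev bound gives density one for the set of $j$ with $\sup_{y\in D_g}\mathrm{dist}(g^j(y),g^j(x_g))<\varepsilon_0$, and the maximal runs of that set serve as the $\mathbb I_k$. The quantifier caveat you raise is reasonable and does not affect the argument. This theorem is only cited from \cite{KNS} in the paper; you are using the same mechanism as \cite{KNS}, the one behind \eqref{eqn_gxgx} in Subsection~\ref{ss_proof_thmA}. Inside a window where both orbits follow the same code, the points are close once one is a margin away from both ends.

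\textbf{The gap.} For the converse you leave the decisive counting step open, and neither remedy you propose works.
\begin{itemize}
\item Passing to a refined coding pair changes the hypothesis. Describability is assumed only with respect to $\{\mathbb U_0,\mathbb U_1\}$, and deriving (D2) for a finer pair is essentially the statement you are trying to prove.
\item A further perturbation of $g$ ``merging intervals'' is unjustified, since it changes the orbit of $D_g$ and nothing preserves (D2). It is also unnecessary.
\end{itemize}

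\textbf{The fix, which is purely combinatorial.}
\begin{itemize}
\item Condition (D2) only involves the union $\mathcal J=\bigcup_k\mathbb I_k$. So you may discard the given decomposition and use the maximal runs of consecutive integers in $\mathcal J$. The given intervals may be adjacent, because $\beta_k\le\alpha_{k+1}-\alpha_k-1$ allows $\alpha_k+\beta_k+1=\alpha_{k+1}$; adjacent short intervals simply merge for free.
\item Two consecutive maximal runs are separated by at least one integer outside $\mathcal J$. Hence the number of runs meeting $[0,n-1]$ is at most $\#([0,n-1]\setminus\mathcal J)+1$, which is $o(n)$ by (D1).
\item Choose $N(\eta)$ such that $g^i(y),g^i(x_g)\in\mathbb U_{v_i}$ for all $|i-j|\le N(\eta)$ forces $\mathrm{dist}(g^j(y),g^j(x_g))<\eta$.
\item Each run then contributes at most $\eta$ times its length plus $2N(\eta)\,\mathrm{diam}\,M$ to $\sum_{j<n}\sup_{y\in D_g}\mathrm{dist}(g^j(y),g^j(x_g))$. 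The indices outside $\mathcal J$ contribute $o(n)\,\mathrm{diam}\,M$.
\item This gives $\limsup_{n}\frac1n\sum_{j<n}\sup_{y\in D_g}\mathrm{dist}(g^j(y),g^j(x_g))\le\eta$ for every $\eta>0$, which is \eqref{defspl}.
\end{itemize}

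Finally, only this qualitative $N(\eta)$ statement is needed, for the single fixed $g$. Your exponential bound $C\lambda^{\min\{j-a,b-j\}}$ for arbitrary points of $\mathbb U_v$ tacitly assumes hyperbolic cone fields on the whole coding pair. The definition of a coding pair does not provide this, although it is harmless in the intended setting.
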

Here $\widehat \Sigma$ is the subset of $\{0,1\}^{\mathbb{Z}}$ consisting of elements 
$(v_j)_{j\in \mathbb{Z}}$ with $(v_j)_{j\in \mathbb{N}_0}\in \Sigma$.

\section{Fundamental settings}

\subsection{Wild blender-horseshoe}

Now we recall a wild affine blender-horseshoe $\Lambda$ given in \cite{KNS23,KNS}, 
where `\emph{wild}' means that $\Lambda$ has a $C^1$-robust homoclinic tangency.
Let $\mathcal{U}_0$ be an open neighborhood of $f_0$ in $\diff^1(M)$, which 
will be replaced with a smaller neighborhood as needed.

In this paper, we only consider the case of $\dim M=3$.
Even in the case of $\dim M=n>3$, our arguments still work for certain elements of 
$\diff^1(M)$ having a blender-horseshoe $\Lambda$ with $\dim W^u(\Lambda)=\dim W^{\cs}(\Lambda)=1$ and $\dim W^{\mathrm{ss}}(\Lambda)=n-2$.

Let $\lambda_{\rm ss}, \lambda_{\rm cs0}, \lambda_{\rm cs1}$ and $\lambda_{\rm u}$ 
be real positive constants
with
\begin{equation}\label{eqn_eigen_v}
\lambda_{\rm ss}<\lambda_{\rm cs0}<1/2<\lambda_{\rm cs1}<1<\lambda_{\rm cs0}+\lambda_{\rm cs1},\quad 2<\lambda_{\rm u},\quad \lambda_{\rm cs0}\lambda_{\rm cs1}\lambda_{\rm u}^{2}<1.
\end{equation}
We fix a sufficiently small  positive number $\varepsilon_0$ and set 
$$
I_{\varepsilon_{0}}=[-\varepsilon_{0},1+\varepsilon_{0}].
$$
Suppose that the cube 
 $\mathbb{B}=I_{\varepsilon_{0}}^{3}$ is embedded in $M$ and $M$ has a Riemannian metric 
 extending the standard Euclidean metric on $\bb$.
Moreover, a neighborhood of $\bb$ in $M$ has a coordinate 
$(x,y,z)$ extending that on $I_{\ve_0}^3$.
 Consider the sub-blocks of $\mathbb{B}$ defined as 
$$
\mathbb{V}_{0}= [-\varepsilon_{0}, \lambda_{\rm u}^{-1}+\varepsilon_0]\times I_{\varepsilon_{0}}^{2},\quad 
\mathbb{V}_{1}= [1-\lambda_{\rm u}^{-1}-\varepsilon_0,1+\varepsilon_{0}]\times I_{\varepsilon_{0}}^{2}. 
$$
\begin{subequations}
See Figure \ref{f_blender}\,(a).
\begin{figure}[hbtp]
\centering
\scalebox{0.6}{\includegraphics[clip]{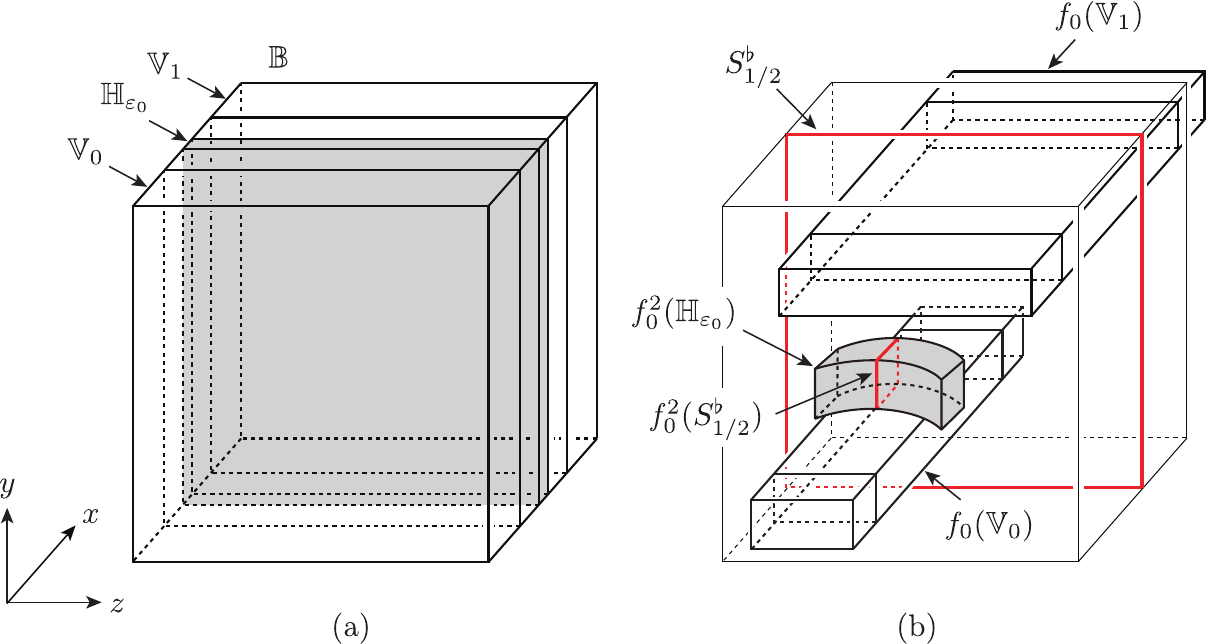}}
\caption{A wild blender-horseshoe.}
\label{f_blender}
\end{figure}
Let $f_{0}$ be a diffeomorphism on $M$ such that $f_{0}|_{\mathbb{V}_{0}\cup \mathbb{V}_{1}}$ is defined as
\begin{equation}\label{eqn_f0V}
f_{0}(x,y,z)=
\begin{cases}
(\lambda_{\mathrm{u}}x,\lambda_{\mathrm{ss}}y,\zeta_0(z))&\text{if}\ (x,y,z)\in \mathbb{V}_{0},\\
(\lambda_{\mathrm{u}}(1-x),-\lambda_{\mathrm{ss}}y+1,\zeta_1(z)) &\text{if}\ (x,y,z)\in \mathbb{V}_{1},
\end{cases}
\end{equation} 
where 
$\zeta_0$ and $\zeta_1$ are the affine maps on $I_{\varepsilon_0}$ given by
\begin{equation}\label{eqn_def_zeta}
\zeta_0(z)=\lambda_{\mathrm{cs} 0}z\quad\text{and}\quad \zeta_1(z)=\lambda_{\mathrm{cs} 1}z+1-\lambda_{\mathrm{cs} 1}.
\end{equation}
\end{subequations}
From our setting, $f_{0}$ has the uniformly hyperbolic set  
\[\Lambda_{f_{0}}=\bigcap_{n\in \mathbb{Z}} f_{0}^n(\mathbb{V}_{0}\cup \mathbb{V}_{1})\]
which belongs to the class of blender-horseshoes, see \cite{BD96,BD12} for details.

Next we suppose another condition to obtain a non-hyperbolic situation.
Let 
$\mathbb{H}_{\varepsilon_0}$ be the subset of $\mathbb{B}$ defined as  
\[
\mathbb{H}_{\varepsilon_0}=[1/2-\varepsilon_0,1/2+\varepsilon_0]\times I_{\varepsilon_0}^2.
\]
\begin{subequations}
For any $(x,y,z)\in \mathbb{H}_{\varepsilon_0}$, 
we suppose that   
\begin{equation}\label{eqn_tang}
f_{0}^{2}(x,y,z)=\left(-a_{1}\Bigl(x-\frac{1}{2}\Bigr)^{2}+a_{2}z+\mu,\ a_{3}\Bigl(y-\frac{1}{2}\Bigr)+\frac{1}{2},\ 
a_{4}\Bigl(x-\frac{1}{2}\Bigr)+\frac{1}{2} \right),
\end{equation}
where $a_{1},a_{2},a_{3},a_{4},\mu$ are real constants satisfying  
\begin{equation}\label{eqn_a_1_4}
a_{1}>0,\ a_2>0,\ 
|a_{3}|<1-2\lambda_{\rm ss}\quad\text{and}\quad a_3a_4<0,
\end{equation} 
and
\begin{equation}\label{eqn_a2_mu}
a_1\ve_0^2+a_2\ve_0<\mu,\quad a_2(1+\ve_0)+\mu<\lambda_{\ru}^{-1}.
\end{equation}
\end{subequations}
See Figure \ref{f_blender}\,(b).
Since $a_2a_3a_4<0$ by \eqref{eqn_a_1_4}, $f_0^2|_{\mathbb{H}_{\varepsilon_0}}$ is orientation preserving.
The condition $|a_{3}|<1-2\lambda_{\rm ss}$ assures that $f_0^2(\hh_{\ve_0})$ 
lies between $f_0(\vv_0)$ and $f_0(\vv_1)$.
The condition \eqref{eqn_a2_mu} implies    
$f_0^2(\hh_{\ve_0})\subset (0,\lambda_{\ru}^{-1})\times I_{\ve_0}^2\subset \vv_0$.
See Figure \ref{f_Swk} below.
By using the distinctive property $\lambda_{\cs 0}+\lambda_{\cs1}>1$ in \eqref{eqn_eigen_v}, we know that $\Lambda_{f_0}$ has a 
robust homoclinic tangency.
See Section 6.2 in \cite{BDV05} for details.

\subsection{Invariant cone-fields and foliations}\label{ss_inv_cone}
For a small $0<\ve<\ve_0$, we consider the following $\mathrm{u}$,  $\mathrm{cs}$-cone-fields on $\mathbb{B}$.
\begin{align*}
\boldsymbol{C}_{\varepsilon}^{\mathrm{u}}(\boldsymbol{x})&=
\left\{
(v^{\mathrm{u}},v^{\mathrm{s}},v^{\mathrm{cs}})\in T_{\boldsymbol{x}}(\mathbb{B})\,;\,
\sqrt{(v^{\mathrm{s}})^2+(v^{\mathrm{cs}})^2}\leq \varepsilon |v^{\mathrm{u}}|
\right\},\\
\boldsymbol{C}_{\varepsilon}^{\mathrm{cs}}(\boldsymbol{x})&=
\left\{
(v^{\mathrm{u}},v^{\mathrm{s}},v^{\mathrm{cs}})\in T_{\boldsymbol{x}}(\mathbb{B})\,;\,
|v^{\mathrm{u}}|\leq\varepsilon\sqrt{(v^{\mathrm{s}})^2+(v^{\mathrm{cs}})^2}
\right\}
\end{align*}
for $\boldsymbol{x}\in\mathbb{B}$.

Now we fix a neighborhood $\mathcal{U}_0$ of $f_0$ in $\mathrm{Diff}^1(M)$, 
which will be replaced by a smaller neighborhood as needed.
So we may assume that, for any $f\in \mathcal{U}_0$, 
$\boldsymbol{C}_{\varepsilon}^{\mathrm{u}}$ is $f$-invariant and 
$\boldsymbol{C}_{\varepsilon}^{\mathrm{cs}}$ is $f^{-1}$-invariant.
We say that a $C^1$-surface $F$ in $\mathbb{B}$ is \emph{adaptable} to $\boldsymbol{C}_{\varepsilon}^{\mathrm{cs}}$ if, for any $\boldsymbol{x}\in F$, 
the tangent plane $T_{\boldsymbol{x}} F$ is contained in $\boldsymbol{C}_{\varepsilon}^{\mathrm{cs}}(\boldsymbol{x})$.
Similarly a $C^1$-arc $\alpha$ in $\mathbb{B}$ is \emph{adaptable} to $\boldsymbol{C}_{\varepsilon}^{\mathrm{u}}$ if, 
for any $\boldsymbol{x}\in \alpha$, $T_{\boldsymbol{x}}\alpha$ is contained in  $\boldsymbol{C}_{\varepsilon}^{\mathrm{u}}(\boldsymbol{x})$.

By the same procedure as in \cite[Subsection 2.4]{PT93},
we can obtain a  
$C^{0}$ stable foliation $\mathcal{F}^{\rm s}_{f}$ on $\mathbb{B}$ 
which is compatible with $W_{\mathrm{loc}}^{\rs}(\Lambda_{f})$ and satisfies the following conditions.
\begin{enumerate}[({F}1)]
\makeatletter
\renewcommand{\p@enumi}{F}
\makeatother
\item
Each leaf of $\mathcal{F}_f^{\mathrm{s}}$ is a $C^1$-surface in $\mathbb{B}$.\label{F1}
\item
The restriction $\mathcal{F}_f^{\mathrm{s}}|_{\mathbb{H}_{\varepsilon_0}}$ consists of flat leaves parallel to the $yz$-plane.\label{F2}
\item
Any leaf of $\mathcal{F}_f^{\mathrm{s}}$ is adaptable to $\boldsymbol{C}_{\varepsilon}^{\mathrm{cs}}$.\label{F3}
\end{enumerate} 
Then $\mathcal{F}_f^{\cs}:=f^{-2}(\mathcal{F}_f^{\mathrm{s}})\cap \hh_{\ve_0}$ is the 
foliation on $\hh_{\ve_0}$.
For $\bx\in \bb$, we denote by $F^{\rs}(\bx)$ the leaf of $\cf_f^{\rs}$ containing 
$\bx$.
Similarly, for $\by\in \hh_{\ve_0}$, $F^{\cs}(\by)$ denotes the leaf of $\cf_f^{\cs}$ 
with $F^{\cs}(\by)\ni \by$.

Let $S_{1/2}^\flat$ be the section $\{x=1/2\}$ of $\hh_{\ve_0}$.
See Figure \ref{f_curv_block}.
For any mutually disjoint leaves $F_0$ and $F_1$ in $\cf_f^{\rs}$, 
the closure of the component of $\bb\setminus F_0\cup F_1$ disjoint 
from the sides $\{-\ve_0,1+\ve_0\}\times I_{\ve_0}^2$ of $\bb$ is called a \emph{u-block}.
For $i=0,1$, the smallest u-block containing $\mathbb{V}_i\cap \Lambda_f$ 
is denoted by $\bb^{\mathrm{u}}(i)$.
For any integer $k\geq 1$, let  
$\underline{w}^{(k)}=w_{1}w_2\dots w_{k-1}w_{k}\in \{0,1\}^{k}$ be a binary code of $k$ entries.
We define the subset  $\mathbb{B}^{\rm u}(\underline{w}^{(k)})$ of $\bb$ by 
\begin{equation}\label{eqn_BBu}
\mathbb{B}^{\rm u}(\underline{w}^{(k)})
=\left\{\boldsymbol{x}\in \mathbb{B}\,;\, f^{i-1}(\boldsymbol{x})\in \mathbb{B}^{\mathrm{u}}(w_i),
i=1,\dots,k\right\},
\end{equation}
which is called the \emph{u-bridge block} associated with $\ul w^{(k)}$.
The subsets $\mathbb{H}_{\underline{w}^{(k)}}$ and $S_{\underline{w}^{(k)}}^\flat$ 
of $\mathbb{B}^{\rm u}(\underline{w}^{(k)})$ are defined as
\begin{equation}\label{eqn_gg_gamma}
\begin{split}
\mathbb{H}_{\underline{w}^{(k)}}&=(f|_{\mathbb{V}_{w_1,f}})^{-1}\circ (f|_{\mathbb{V}_{w_2,f}})^{-1}\circ\cdots \circ (f|_{\mathbb{V}_{w_{k-1},f}})^{-1}\circ (f|_{\mathbb{V}_{w_k,f}})^{-1}(\mathbb{H}_{\varepsilon_0}),\\
S_{\underline{w}^{(k)}}^\flat&=(f|_{\mathbb{V}_{w_1,f}})^{-1}\circ (f|_{\mathbb{V}_{w_2,f}})^{-1}\circ\cdots \circ (f|_{\mathbb{V}_{w_{k-1},f}})^{-1}\circ (f|_{\mathbb{V}_{w_k,f}})^{-1}(S_{1/2}^\flat),
\end{split}
\end{equation}
where $\mathbb{V}_{i,f}$  $(i=0,1)$ is the component of $\mathbb{B}\cap f^{-1}(\mathbb{B})$ contained in $\mathbb{V}_i$.
By \eqref{F2}, $S_{1/2}^\flat$ is a leaf of $\cf_f^{\rs}$ and hence 
$S_{\underline{w}^{(k)}}^\flat$ is also a leaf of $\cf_f^{\rs}|_{\mathbb{B}^{\rm u}(\underline{w}^{(k)})}$.

Since $f$ is sufficiently $C^1$-close to $f_0$, it follows from the construction of $f_0$ that 
one can take $\mu$ satisfying \eqref{eqn_a2_mu} and a binary code $\ul{w}^{(n_0)}$ 
of large length $n_0$ so that 
 $\mathbb{B}^{\rm u}(\underline{w}^{(n_0)})$ is divided into two components by $f^2(\mathrm{Int}\,S_{1/2}^\flat)$.
 If necessary replacing $\ve_0$ with a smaller positive constant, we may also assume that 
$\mathbb{B}^{\rm u}(\underline{w}^{(n_0)})$ is disjoint from the roof 
$f^2(\hh_{\ve_0}\cap \{z=1+\ve_0\})$ of $f^2(\hh_{\ve_0})$.
See Figure \ref{f_Swk}.
\begin{figure}[hbtp]
\centering
\scalebox{0.6}{\includegraphics[clip]{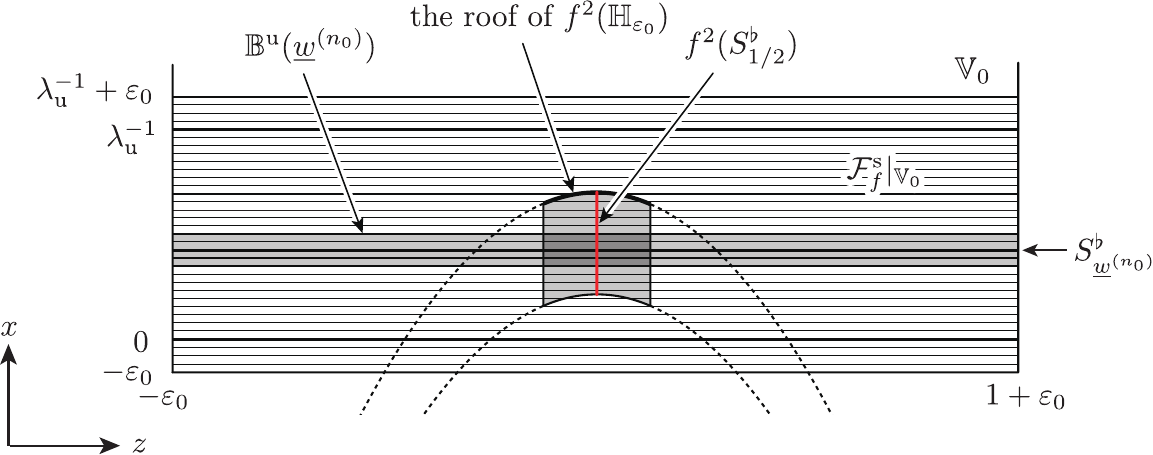}}
\caption{Transition from $\hh_{\ve_0}$ to $\vv_0$.}
\label{f_Swk}
\end{figure}

Let  $\mathbb{H}_{\,[k]}=\bigcup_{\underline{w}^{(k)}\in \{0,1\}^k}\mathbb{H}_{\underline{w}^{(k)}}$ and  
$\mathbb{H}_{\,[\infty]}=\bigcup_{k=0}^\infty \mathbb{H}_{\,[k]}$, where $\mathbb{H}_{\,[0]}=\mathbb{H}_{\varepsilon_0}$.
Let $\mathcal{L}_{(k;\infty)}$ be the 1-dimensional $C^0$-foliation on $\hh_{[k]}$ 
consisting of $C^1$-leaves defined as in \cite[Subsection 4.1]{KNS}, which is adaptable to $\boldsymbol{C}_{\varepsilon}^{\mathrm{u}}$.
Moreover it is shown that the union 
\[
\mathcal{L}_\infty:=\bigcup_{k=0}^\infty\mathcal{L}_{(k;\infty)}
\]
is $f$-\emph{invariant} in the sense that, for any 
leaf $l$ of $\mathcal{L}_{(k+1;\infty)}$, $f(l)$ is a leaf of $\mathcal{L}_{(k;\infty)}$.
From the construction of $\mathcal{L}_{(k;\infty)}$, we also know that, for any $\ve_*>0$ and 
any leaf $l$ of $\mathcal{L}_{(k;\infty)}$, there exists a neighborhood $\mathcal{N}(l)$ of 
$l$ in $\bb$ such that any leaf $l'$ of $\mathcal{L}_{(k;\infty)}$ contained in $\mathcal{N}(l)$ is $C^1$-$\ve_*$-close to $l$.
Even so, the authors are not certain whether $\mathcal{L}_{(k;\infty)}$ is $C^1$-diffeomorphic to a product foliation.

\section{First perturbation}

In this section, we present results on $C^1$-diffeomorphisms obtained by 
arguments similar to those in \cite{KNS} with minor changes.
Arguments applicable only in the $C^1$-category will be treated in subsequent sections.

\subsection{Estimation of eigenvalues}\label{ss_est_eigen}

\begin{subequations}\label{ss_est_eigen}
Suppose that $\lambda_{\sigma}$ with $\sigma\in\{{\rm u,ss, cs0, cs1}\}$ are 
the constants  given in  \eqref{eqn_eigen_v}.
For a sufficiently small $\varepsilon>0$, we write
\begin{equation}\label{abb1}
\underline{\lambda}_{\sigma}=(1-\varepsilon)\lambda_{\sigma},\ 
\ol{\lambda}_{\sigma}=(1+\varepsilon)\lambda_{\sigma}.
\end{equation}
By \eqref{eqn_eigen_v}, one can choose $\ve>0$ so that the following inequalities hold.
\begin{equation}\label{eqn_eigen_v2}
\begin{split}
&\overline{\lambda}_{\rm ss}<\ul\lambda_{\rm cs0}<\ol\lambda_{\rm cs0}<1/2<\underline\lambda_{\rm cs1}<\ol\lambda_{\rm cs1}<1<\underline\lambda_{\rm cs0}+\underline\lambda_{\rm cs1},\quad 2<\underline\lambda_{\rm u},\\
&\ol\lambda_{\rm cs0}\ol\lambda_{\rm cs1}\ol\lambda_{\ru}^{\,4}\ul\lambda_{\ru}^{-2}<1.
\end{split}
\end{equation}
\end{subequations}
By \eqref{eqn_f0V}, we may take $\mathcal{U}_0$ so that, for any $f\in \mathcal{U}_0$, 
$Df(\boldsymbol{x})$ is arbitrarily $C^{0}$-close to the diagonal matrix
\[
Df_0(\boldsymbol{x})=\mathrm{diag}((-1)^i\lambda_{\mathrm{u}},(-1)^i\lambda_{\mathrm{ss}},\lambda_{\mathrm{cs} i})
\]
for $\boldsymbol{x}\in \mathbb{V}_{i}$ $(i=0,1)$.
So one can suppose that, for any $\bx\in \mathbb{V}_{0,f}\cup \mathbb{V}_{1,f}$ 
and any unit vector $\bv$ of $\boldsymbol{C}_\ve^{\ru}(\bx)$,
\begin{equation}\label{eqn_lamDf}
\underline{\lambda}_{\ru}<\|Df(\bx)\bv\|<\ol\lambda_{\ru}.
\end{equation}
It follows that, for any $C^1$-curve $l$ in $\mathbb{B}^{\mathrm{u}}(\underline{w}^{(k)})$ adaptable to the cone-field $\boldsymbol{C}_{\varepsilon}^{\mathrm{u}}$, 
\begin{equation}\label{eqn_lamfkl}
\ol{\lambda}_{\mathrm{u}}^{\,-k}|f^k(l)|<
|l|<\ul{\lambda}_{\mathrm{u}}^{-k}|f^k(l)|, 
\end{equation}
where the length of a $C^1$-arc $\alpha$ in $\bb$ is denoted by $|\alpha|$. 
We may also assume that, for any $\bx\in \mathbb{V}_{i,f}$ $(i=0,1)$ and any  unit vector $\bv_i$ 
tangent to $F^{\rs}(\bx)$ at $\bx$, 
\begin{equation}\label{eqn_lambda_e/2}
\|Df(\bx)\bv_i\|<\ol\lambda_{\cs i}.
\end{equation}

\subsection{A sequence of binary codes}\label{ss_binary_code}
We denote the length of a finite binary code $\ul{w}$ by $|\ul{w}|$.
For the binary code $\ul{w}^{(n_0)}$ given in Subsection \ref{ss_inv_cone} and an integer $m\geq 0$, 
we consider a binary code $\ul{w}=\ul{w}^{(n_0+m)}$ represented as $\ul{w}^{(n_0)}\ul{u}^{(m)}$, 
where $\ul{u}^{(m)}$ is a binary code with $|\ul{u}^{(m)}|=m$.
Note that $\bb^{\ru}(\ul{w})$ is divided into two components by $f^2(\mathrm{Int}\,S_{1/2}^\flat)$.
We say that 
$\mathbb{U}_{\underline{w}}^{\mathrm{cs}}:=f^{-2}(\mathbb{B}^{\mathrm{u}}(\underline{w}))\cap \mathbb{H}_{\varepsilon_0}$ 
is the \emph{cs-curved block} for $\ul{w}$ and 
and $\wh\Sigma_{\ul{w}}^\flat:=\mathbb{U}_{\ul w}^{\mathrm{cs}}\cap S_{1/2}^\flat$ is the \emph{section} of $\mathbb{U}_{\ul w}^{\mathrm{cs}}$.
See Figure \ref{f_curv_block}.
\begin{figure}[hbtp]
\centering
\scalebox{0.6}{\includegraphics[clip]{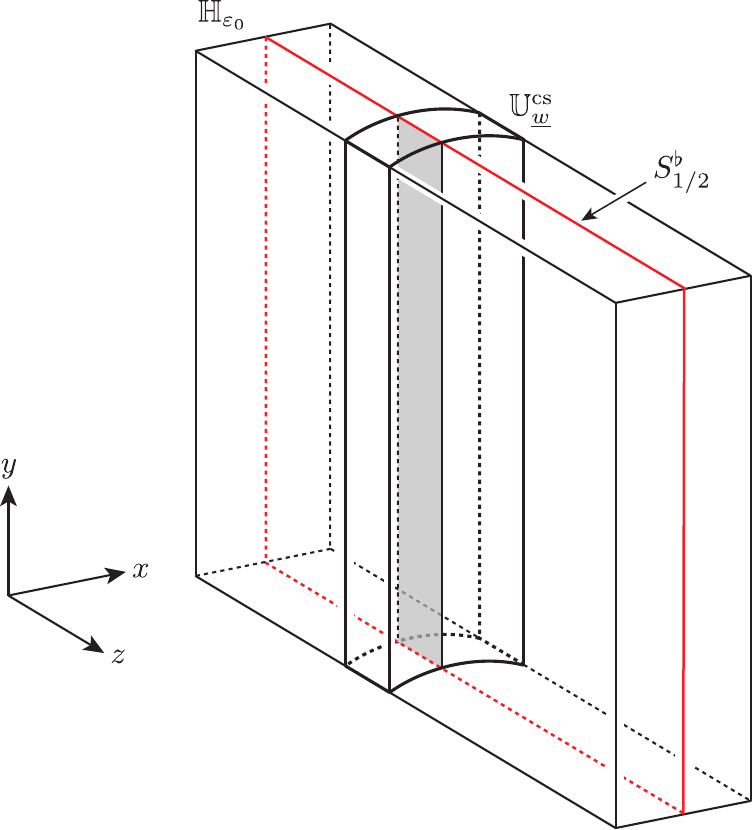}}
\caption{The shaded rectangle represents $\wh\Sigma_{\ul w}^\flat$.}
\label{f_curv_block}
\end{figure}

The following lemma is proved by arguments quite similar to those in \cite[Lemma 8.1]{KNS}, 
where all positive integers are independent of $k$.
However we use here the $C^1$-surfaces $S_{\ul w}^\flat$  
instead of the topological surfaces $S_{\ul w}^{\mathrm{cs}}$ in \cite{KNS}.
We should note that the distinctive condition $\underline\lambda_{\rm cs0}+\underline\lambda_{\rm cs1}>1$ of \eqref{eqn_eigen_v2} is crucial in the proof of the lemma.

\begin{lem}\label{lem-3-1}
Fix an integer $L\geq 4$ arbitrarily.
There exists a sequence $(\underline{w}^{(n_0+Lk)})_{k\geq 1}$ of binary codes 
extending $\underline{w}^{(n_0)}$ with 
 $|\underline{w}^{(n_0+Lk)}|=n_0+Lk$ for some positive integer $n_0$ 
 and  there exists a binary code $\underline{\widehat w}_k$
for an arbitrarily chosen binary code $\underline{u}_k$ of finite length satisfying the following conditions.
\begin{enumerate}[\rm (1)]
\item\label{lem-3-1(0)}
$\bb^{\ru}(\ul{w}^{(n_0+Lk)})\cap \bb^{\ru}(\ul{w}^{(n_0+Ll)})=\emptyset$ for any non-negative integers $k,l$ with $k\neq l$ and $\bb^{\ru}(\ul{w}^{(n_0+Ll)})$ is closer to the bottom $\{x=-\varepsilon_0\}$ of $\bb$ if $l>k$.
See Figure \ref{f_Bun0}.
\item\label{lem-3-1(1)}
$\underline{\widehat w}_k$ 
is represented as $\underline{w}^{(n_0+Lk)}\underline{u}_k\underline{\iota}_k\underline{\gamma}^{(m_k)}$, 
where 
$\underline{\iota}_k$ and $\underline{\gamma}^{(m_k)}$ are binary codes given as follows. 
\begin{itemize}
\setlength{\itemindent}{-16pt}
\item
$|\underline{\iota}_k|\leq \mu_0$ for some positive integer $\mu_0$ (possibly $\underline{\iota}_k=\emptyset$). 
\item
$\ul\gamma^{(m_k)}$ is represented as 
$\ul\gamma^{(m_k)}=\gamma_{m_k}\gamma_{m_k-1}\cdots \gamma_2\gamma_1$  
for some $0<m_k\leq N_0+N_1k$, where $N_0$ and $N_1$ are  properly chosen positive integers.
\end{itemize}
\item \label{lem-3-1(2)}
$f^{|\widehat{\underline{w}}_k|}(S_{\widehat{\underline{w}}_k}^\flat)$ is contained in $\wh\Sigma_{\underline{w}^{(n_0+L(k+1))}}^\flat$, see \eqref{eqn_gg_gamma} for 
$S_{\widehat{\underline{w}}_k}^\flat$.
\end{enumerate}
\end{lem}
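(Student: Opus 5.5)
The plan is to build the codes $\underline{w}^{(n_0+Lk)}$ explicitly, so that (1) holds by a shadowing argument in the unstable direction. For (2) and (3) I would reduce the containment to a one-dimensional covering statement for the center-stable iterated function system close to $\{\zeta_0,\zeta_1\}$. The covering comes from the blender overlap $\underline\lambda_{\rm cs0}+\underline\lambda_{\rm cs1}>1$ of \eqref{eqn_eigen_v2}.

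\emph{Step 1 (the codes).} I would set $\underline{w}^{(n_0+Lk)}=\underline{w}^{(n_0)}0^{Lk-1}1$, adjusting the final symbols if needed to fix orientation. Since $f^{n_0}$ maps $\mathbb{B}^{\rm u}(\underline{w}^{(n_0)})$ across $\mathbb{B}$ in the $x$-direction, the blocks $\mathbb{B}^{\rm u}(0^{a}1)$ for different $a$ are pairwise disjoint. They accumulate on $W^{\rm s}$ of the fixed point in $\mathbb{V}_0$, at distance of order $\underline\lambda_{\rm u}^{-a}$ by \eqref{eqn_lamfkl}. Pulling back by the branch $f^{-n_0}$ gives disjointness of the blocks $\mathbb{B}^{\rm u}(\underline{w}^{(n_0+Lk)})$ and their monotone approach to the bottom $\{x=-\varepsilon_0\}$, which is (1). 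The same estimate shows that the $x$-width of $\mathbb{B}^{\rm u}(\underline{w}^{(n_0+L(k+1))})$ is at least $c\,\overline\lambda_{\rm u}^{-(n_0+L(k+1))}$. By \eqref{eqn_tang}, $f^2|_{S^\flat_{1/2}}$ sends $z$ to $x$ essentially by $z\mapsto a_2z+\mu$. Hence $\widehat\Sigma^\flat_{\underline{w}^{(n_0+L(k+1))}}$ contains a strip $\{z\in J_{k+1}\}\cap S^\flat_{1/2}$ bounded by two $C^1$ curves transverse to the $z$-axis, with $|J_{k+1}|\ge c'\,\overline\lambda_{\rm u}^{-L(k+1)}$. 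For $f$ $C^1$-close to $f_0$, this uses \eqref{F2}, \eqref{F3} and the cone-invariance for $f^{-2}$.

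\emph{Step 2 (the bridge $\underline\iota_k$).} Let $\underline u_k$ be arbitrary. The surface $S^\flat_{\underline{w}^{(n_0+Lk)}\underline u_k}$ is carried by $f^{n_0+Lk+|\underline u_k|}$ into a $C^1$ surface adaptable to $\boldsymbol{C}^{\rm cs}_\varepsilon$. Its position along the $z$-direction is unknown. I would choose a code $\underline\iota_k$ of bounded length $\le\mu_0$, independent of $k$ and $\underline u_k$, so that the further image is a cs-surface whose $z$-projection lies in a fixed compact subinterval $K\subset(0,1)$ of the overlap region. This is possible because finitely many short words suffice: by compactness of the set of possible $z$-positions and the uniform contraction \eqref{eqn_lambda_e/2}, any point of $I_{\varepsilon_0}$ is sent into $K$ by some word of bounded length.

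\emph{Step 3 (the contraction word $\underline\gamma^{(m_k)}$).} This is the blender step, and I expect it to be the main obstacle in the $C^1$ setting. Write $\tilde\zeta_0,\tilde\zeta_1$ for the maps induced on the $z$-coordinate by the branches $f|_{\mathbb{V}_{i,f}}$ along cs-surfaces. These are only $C^1$-close to $\zeta_0,\zeta_1$, and they depend on the leaf, since the leaves are merely adaptable to $\boldsymbol{C}^{\rm cs}_\varepsilon$ and not flat. The overlap condition $\underline\lambda_{\rm cs0}+\underline\lambda_{\rm cs1}>1$ still gives the covering $\tilde\zeta_0(K')\cup\tilde\zeta_1(K')\supset K'$ for a slightly larger interval $K'$, uniformly over the perturbations.

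I would then run the standard backward-selection argument. Given the target interval $J_{k+1}$, repeatedly choose $\gamma_1,\gamma_2,\dots$ so that $\tilde\zeta_{\gamma_1}\circ\cdots\circ\tilde\zeta_{\gamma_j}(K)$ stays inside a chosen neighborhood of $J_{k+1}$. The image lengths shrink at least like $\overline\lambda_{\rm cs1}^{\,j}$, so the process stops once $\overline\lambda_{\rm cs1}^{\,m}|K|<|J_{k+1}|/2$. This gives
\[
m_k\le N_0+N_1k,\qquad N_1\approx L\log\overline\lambda_{\rm u}\,/\,|\log\overline\lambda_{\rm cs1}|.
\]
In the $y$- (strong stable) direction the image is contracted by $\overline\lambda_{\rm ss}^{\,m}$ and stays inside $\mathbb{H}_{\varepsilon_0}$, using $|a_3|<1-2\lambda_{\rm ss}$.

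The care needed is that the curved image surface, not just its $z$-projection, must lie between the two $C^1$ boundary curves of the strip. I would control this by the smallness of the cone angle $\varepsilon$ relative to the $x$-width from Step 1, uniformly in $k$. Finally, setting $\underline{\widehat w}_k=\underline{w}^{(n_0+Lk)}\underline u_k\underline\iota_k\underline\gamma^{(m_k)}$, the surface $f^{|\underline{\widehat w}_k|}(S^\flat_{\underline{\widehat w}_k})$ lies in $S^\flat_{1/2}$ and, by Steps 2 and 3, inside $\widehat\Sigma^\flat_{\underline{w}^{(n_0+L(k+1))}}$, which is (3).
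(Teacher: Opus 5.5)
The paper proves this lemma only by reference to \cite[Lemma 8.1]{KNS}. Your overall structure matches the shape of $\wh{\ul w}_k$: disjoint blocks, then a bounded normalizing word $\ul\iota_k$, then a contraction word $\ul\gamma^{(m_k)}$ chosen backwards using the overlap. However, Step 3, and the product-strip claim at the end of Step 1 that it relies on, does not work in the $C^1$ setting.

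You reduce the positioning problem to a one-dimensional system $\tilde\zeta_0,\tilde\zeta_1$ in $z$, aimed at a product target $\{z\in J_{k+1}\}$. For $f\neq f_0$ neither object exists.

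\emph{No induced $z$-maps.} The $z$-component of $f$ along a cs-leaf depends on $y$ as well as on $z$, so the branches do not induce maps on the $z$-coordinate.

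\emph{The target is not a product strip.} $\wh\Sigma^\flat_{\ul w^{(n_0+L(k+1))}}$ is bounded by the curves $f^{-2}(F)\cap S^\flat_{1/2}$, where $F$ runs over the two boundary leaves of $\bb^{\ru}(\ul w^{(n_0+L(k+1))})$. These leaves lie in $\vv_0$, where \eqref{F2} gives no flatness. Like the transition $f^2$, they are only $C^1$-close to their affine models, and their tilt is controlled by the cone $\boldsymbol{C}^{\cs}_{\ve}$, not by anything that shrinks with $k$. So the band has $z$-width $\sim\ol\lambda_{\ru}^{\,-L(k+1)}$, but its boundary curves $z=h_\pm(y)$ in general oscillate across the $y$-range by an amount independent of $k$. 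For large $k$ it contains no product strip $\{z\in J\}$ of full $y$-extent.

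\emph{The proposed repairs fail.} Making $\ve$ small relative to the $x$-width ``uniformly in $k$'' is impossible, because $\ve$ is fixed and the width tends to $0$. You also cannot instead aim $y$ into a short interval where the band is nearly flat. The attainable $y$-positions form a Cantor set (since $\ol\lambda_{\rm ss}<1/2$), and they are dictated by the same symbols $\gamma_j$ you are using for $z$.

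The missing idea is to run the backward selection on curves spanning the strong stable direction, not on points or intervals.

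\emph{Pulling back a core curve.} Take a core curve $C_0$ of the band: a graph over the whole $y$-range with small slope. Pull it back successively through the branches $\gamma_1,\gamma_2,\dots$, so that $C_j\subset S^\flat_{\gamma_j\cdots\gamma_1}$. The domination $\ol\lambda_{\rm ss}<\ul\lambda_{\rm cs0}$ in \eqref{eqn_eigen_v2} makes a cone $\{|dz|\le\kappa|dy|\}$ tangent to the cs-leaves invariant under the inverse branches. Hence each $C_j$ again crosses the full $y$-range with slope at most $\kappa$. The robust overlap $\ul\lambda_{\rm cs0}+\ul\lambda_{\rm cs1}>1$ then lets you choose $\gamma_j$ so that $C_j$ stays in $\{z\in K\}$.

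\emph{Conclusion of the selection.} The part of $S^\flat_{\ul\gamma^{(m)}}$ over $K$, which contains the patch placed there by $\ul\iota_k$, meets $C_m$. Its $f^m$-image therefore meets $C_0$, and by \eqref{eqn_lambda_e/2} it has diameter at most $C\ol\lambda_{\rm cs1}^{\,m}$. So it lies in the band once this is below a fixed fraction of the band's width, which gives $m_k\le N_0+N_1k$.

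\emph{Two further points.} You also need the band to lie well inside the covering region in $z$, which constrains the choice of $\ul w^{(n_0)}$. Separately, your codes $\ul w^{(n_0)}0^{Lk-1}1$ satisfy (1) as stated, but the paper, following \cite[Subsection 3.3]{KNS}, takes $\ul w^{(n_0+Lk)}$ extending $\ul w^{(n_0+k)}$ with distinct $(n_0+k)$-th entries. That keeps consecutive blocks $\gtrsim\ol\lambda_{\ru}^{\,-(n_0+k)}$ apart, rather than of order $\lambda_{\ru}^{-(n_0+Lk)}$ apart. This separation is what the later construction needs for the cubes $\mathbb{D}_k$ of size $\ul\lambda_{\ru}^{-(n_0+3k)}$ to be pairwise disjoint.
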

\begin{figure}[hbtp]
\centering
\scalebox{0.6}{\includegraphics[clip]{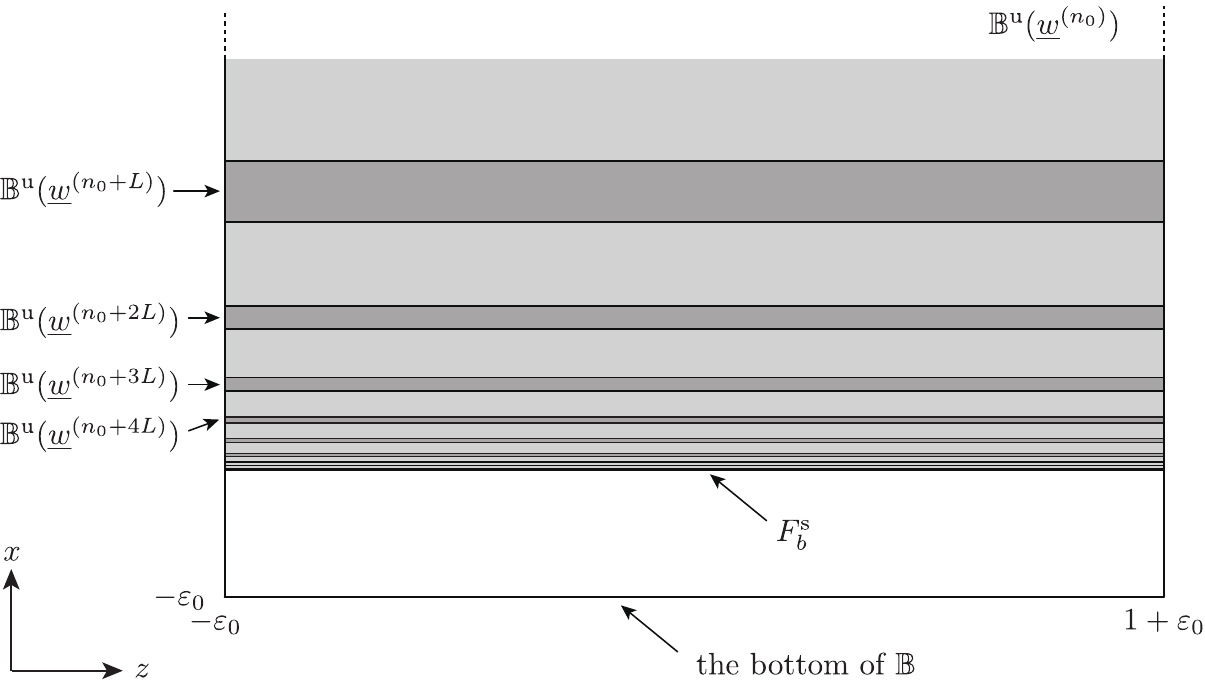}}
\caption{
$B^{\ru}(\ul w^{(n_0)})$ in this figure is not drawn to scale. 
The figure is intended to show the locations of $B^{\ru}(\ul w^{(n_0+L)})$, 
$B^{\ru}(\ul w^{(n_0+2L)}), \dots$ in $B^{\ru}(\ul w^{(n_0)})$. 
In reality, $B^{\ru}(\ul w^{(n_0)})$ is a much thinner block.
$\bigl(\bb^{\ru}(\ul{w}^{(n_0+Lk)})\bigr)_{k\geq 1}$ converges to the bottom $F_{b}^{\rs}$ 
of $\bb^{\ru}(\ul{w}^{(n_0)})$ in the Hausdorff metric, which is a leaf of $\cf_f^{\rs}$.
}
\label{f_Bun0}
\end{figure}

We refer to \cite{KNS} for the precise values of $n_0$, $L$, $\mu_0$, $N_0$ and $N_1$.
In \cite[Subsection 3.3]{KNS}, it is shown that $\ul w^{(n_0+Lk)}$ in this lemma is 
a binary code extending $\ul w^{(n_0+k)}$ and hence extending $\ul w^{(n_0)}$.
This implies that both $\ul{w}^{(n_0+Lk)}$ and $\ul{w}^{(n_0+Ll)}$ are extensions of $\ul{w}^{(n_0)}$ for any $k,k'$ with $0<k<k'$.
However, they have distinct $(n_0+k)$-th entries, see (3.9) in \cite{KNS}, so 
$\ul{w}^{(n_0+Lk')}$ is not an extension of $\ul{w}^{(n_0+Lk)}$ in any case.

\begin{remark}\label{r_free_u}
The fact that $\ul{u}_k$ can be chosen arbitrarily plays a crucial role in proving 
Theorem \ref{mainthm}.
\end{remark}

We set $|\widehat{\underline{w}}_k|=\widehat n_k$ for short.
From now on, we only consider the case of $|\ul{u}_k|=k^2$.
Then, by Lemma \ref{lem-3-1}, 
\begin{equation}\label{eqn_hatn0}
\widehat n_k=n_0+Lk+|\underline{u}_k|+|\underline{\iota}_k|+m_k=k^2+O(k).
\end{equation}

\subsection{Construction of a pseudo-orbit of $f$}\label{ss_const_p-o}

From the shape of $\cf_f^{\cs}$, for any leaf $l$ of $\mathcal{L}_{(0,\infty)}$, 
there exists a leaf of $\cf_f^{\cs}$ tangent to $l$.
In fact, when leaves of $\cf_f^{\rs}$ travel from the right side $\{z=1+\ve_0\}$ to the left side $\{z=-\ve_0\}$ of $\hh_{\ve_0}$, any first encountering point of $l$ and a leaf of $\cf_f^{\rs}$ is a tangency point.
In contrast to the $C^r$-case $(r\geq 2)$, it may occur that more than one leaf is tangent to a single 
$l$ or a single leaf of $\cf_f^{\cs}$ is tangent to $l$ in more than one points.
See Figure \ref{f_taul}.
\begin{figure}[hbtp]
\centering
\scalebox{0.6}{\includegraphics[clip]{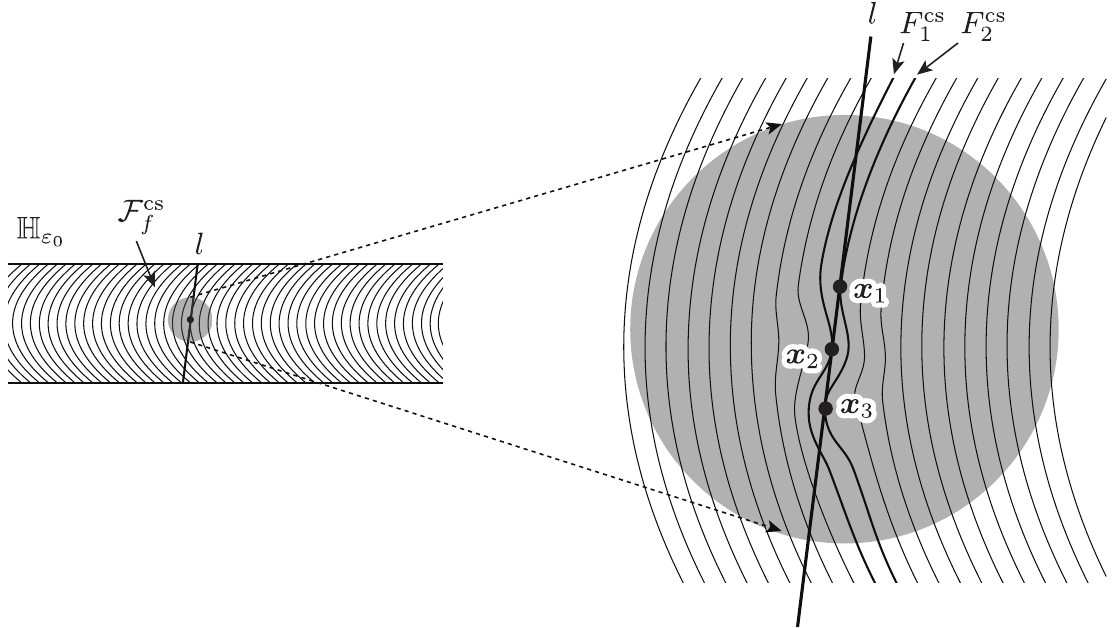}}
\caption{$F_1^{\cs}$ is tangent to $l$ at $\bx_2$ and $F_2^{\cs}$ to $l$ at $\bx_1$, $\bx_3$.
So $\tau(l)\supset \{\bx_1,\bx_2,\bx_3\}$.
In the case of $C^r$ $(r\geq 2)$, since the normal curvature of leaves of $\cf_f^{\cs}$ in the direction of leaves of $\mathcal{L}_{(0;\infty)}$ is positive, they do not have waves that 
$F_1^{\cs}$ or $F_2^{\cs}$ has. }
\label{f_taul}
\end{figure}
Let $\tau(l)$ be the set of such tangency points.
We set 
\[
S^{\tau}=\bigcup\left\{\tau(l)\,;\,\text{$l$ is a leaf of $\mathcal{L}_{(0;\infty)}$}\right\}.
\]
One can also show that, for any leaf $F$ of $\cf_f^{\cs}$ disjoint from the side $I_{\ve_0}^2\times \{-\ve_0,1+\ve_0\}$ of $\bb$, $F\cap S^\tau$ is not empty.
Indeed a point of $F\cap S^\tau$ can be detected by moving leaves of 
$\mathcal{L}_{(0,\infty)}$ from the left side $\{z=-\ve_0\}$ to the right side $\{z=1+\ve_0\}$.

For any binary code $\underline{w}^{(k)}=w_1w_2\ldots w_{k-1}w_k$,
let $S_{\underline{w}^{(k)}}^{\tau}$ be the subset  of $\hh_{\underline{w}^{(k)}}$ defined by 
\[
S_{\underline{w}^{(k)}}^\tau=(f|_{\mathbb{V}_{w_1,f}})^{-1}\circ (f|_{\mathbb{V}_{w_2,f}})^{-1}\circ\cdots \circ (f|_{\mathbb{V}_{w_{k-1},f}})^{-1}\circ (f|_{\mathbb{V}_{w_k,f}})^{-1}(S^\tau).
\]
Since each leaf of $\mathcal{L}_{(0;\infty)}$ is adaptable to $\boldsymbol{C}_{\ve}^{\ru}$, 
from the shape of $\cf_f^{\cs}$ we may assume that $S^\tau$ is contained in $\hh_{\ve_0/2}=
[\frac12-\frac{\ve_0}2,\frac12+\frac{\ve_0}2]\times I_{\ve_0}^2$ 
if necessary replacing $\mathcal{U}_0$ by a smaller neighborhood of $f_0$.

\begin{lem}[cf.\ {\cite[Lemma 8.3]{KNS}}]\label{l_xkinS}
For any positive integer $k_0>0$, there exists a sequence $(\widehat{\boldsymbol{x}}_k)_{k\geq k_0}$ with $\widehat{\boldsymbol{x}}_k\in S_{\widehat{\underline{w}}_k}^\tau$,  
$f^{-2}(\widehat{\boldsymbol{x}}_{k+1})\in \mathbb{U}_{k+1}^{\mathrm{cs}}$ and 
$\|f^{\widehat n_k}(\widehat{\boldsymbol{x}}_k)- f^{-2}(\widehat{\boldsymbol{x}}_{k+1})\|=O(\underline{\lambda}_{\mathrm{u}}^{-(n_0+L(k+1))})$.
\end{lem}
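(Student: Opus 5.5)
The plan is to build the points $\widehat{\bx}_k$ one at a time, using Lemma~\ref{lem-3-1}\,\eqref{lem-3-1(2)} together with the exponential contraction of u-bridge blocks in the unstable direction.

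For each $k\geq k_0$, I first pick a point of $S^\tau_{\widehat{\ul w}_k}$. The surface $S^\flat_{\widehat{\ul w}_k}$ is a leaf of $\cf^{\rs}_f$ inside $\bb^{\ru}(\widehat{\ul w}_k)$. Its image $f^{\widehat n_k}(S^\flat_{\widehat{\ul w}_k})$ is the leaf $S^\flat_{1/2}$, carried into $\hh_{\ve_0}$ by the last backward branch. Now consider the leaves of $\cf^{\cs}_f$ inside $\mathbb{U}^{\cs}_{\ul w^{(n_0+L(k+1))}}$. These are preimages under $f^2$ of leaves of $\cf^{\rs}_f$ in $\bb^{\ru}(\ul w^{(n_0+L(k+1))})$. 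Such a block is bounded away from the roof, so its cs-leaves are disjoint from the sides $I_{\ve_0}^2\times\{-\ve_0,1+\ve_0\}$. By the remark preceding the lemma, each such leaf $F$ meets $S^\tau$.

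I then define the point $\widehat{\bx}_k$ as a pull-back. Take $\by_{k+1}$, a point of $S^\tau\cap F$ for a leaf $F$ of the block $\mathbb{U}^{\cs}_{k+1}:=\mathbb{U}^{\cs}_{\ul w^{(n_0+L(k+1))}}$. Set $\widehat{\bx}_k$ to be the unique point of $S^\tau_{\widehat{\ul w}_k}$ obtained by pulling $\by_{k+1}$ back along the inverse branches defining $\widehat{\ul w}_k$, applied to the point of $S^\tau$ that corresponds to it. Concretely, $S^\tau_{\widehat{\ul w}_k}$ is defined by those branches, so $\widehat{\bx}_k\in S^\tau_{\widehat{\ul w}_k}$ holds by construction. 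Its forward image $f^{\widehat n_k}(\widehat{\bx}_k)$ lies in the leaf $f^{\widehat n_k}(S^\flat_{\widehat{\ul w}_k})$, which sits in $\wh\Sigma^\flat_{\ul w^{(n_0+L(k+1))}}$ by Lemma~\ref{lem-3-1}\,\eqref{lem-3-1(2)}.

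Next I choose $\widehat{\bx}_{k+1}$ so that $f^{-2}(\widehat{\bx}_{k+1})$ lies in $\mathbb{U}^{\cs}_{k+1}$, as required. Here $\widehat{\bx}_{k+1}$ is itself the point of $S^\tau_{\widehat{\ul w}_{k+1}}$ constructed at the next stage; since the construction is recursive, all stages are handled uniformly. The key observation is that $f^{2}(\mathbb{U}^{\cs}_{k+1})=\bb^{\ru}(\ul w^{(n_0+L(k+1))})\cap f^2(\hh_{\ve_0})$ has small width in the u-direction. Every curve adaptable to $\boldsymbol{C}^{\ru}_\ve$ crossing $\bb^{\ru}(\ul w^{(n_0+L(k+1))})$ has length $O(\ul\lambda_{\ru}^{-(n_0+L(k+1))})$ by \eqref{eqn_lamfkl}. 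Pulling back by $f^{-2}$ changes lengths only by a bounded factor.

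It remains to bound the distance. Both $f^{\widehat n_k}(\widehat{\bx}_k)$ and $f^{-2}(\widehat{\bx}_{k+1})$ lie in the cs-curved block $\mathbb{U}^{\cs}_{k+1}$, the first on its section $\wh\Sigma^\flat$ and the second on a tangency locus. The section has bounded size in the $y,z$-directions, so I need to align those coordinates. To do this I choose the tangency point on the leaf through the section-image, so that the two points are joined by a short u-arc in $\mathbb{U}^{\cs}_{k+1}$. Specifically, I take $l$ to be the $\mathcal L_{(0;\infty)}$-leaf through $f^{\widehat n_k}(\widehat{\bx}_k)$, and pick $f^{-2}(\widehat{\bx}_{k+1})$ in $\tau(l)\cap\mathbb{U}^{\cs}_{k+1}$.

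The main obstacle is arranging the recursion so that these choices are consistent. Because of the waves in Figure~\ref{f_taul}, the tangency point of $l$ with the cs-leaves may not be unique, and it may not lie in the block at all. I would handle this by using that $\mathbb{U}^{\cs}_{k+1}$ is a union of cs-leaves bounded by two leaves. Along $l$, these bounding leaves cut out a subarc of length $O(\ul\lambda_{\ru}^{-(n_0+L(k+1))})$. Sweeping leaves of $\cf^{\cs}_f$ across this subarc, the first encounter point is a tangency point in $\tau(l)$ lying on the subarc, and hence inside the block. This gives the required bound $O(\ul\lambda_{\ru}^{-(n_0+L(k+1))})$. Finally, $\widehat{\bx}_{k+1}$ is defined as the $f^2$-image of this point, pulled back through the branches of $\widehat{\ul w}_{k+1}$ into $S^\tau_{\widehat{\ul w}_{k+1}}$ in the sense of the stage-$(k+1)$ identification. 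This reproduces the inductive hypothesis at the next stage.
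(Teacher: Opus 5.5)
The inductive step has a genuine gap, in two linked places. First, the arc you use to bound the distance is the wrong one. You join $f^{\wh n_k}(\wh\bx_k)$ to $f^{-2}(\wh\bx_{k+1})$ along the $\mathcal{L}_{(0;\infty)}$-leaf $l$ through $f^{\wh n_k}(\wh\bx_k)$. You then claim that $l\cap\mathbb{U}^{\cs}_{k+1}$ has length $O(\ul\lambda_{\ru}^{-(n_0+L(k+1))})$. But $l$ is a u-curve in $\hh_{\ve_0}$ running along the fold, where it is tangent to cs-leaves. The curved block, by contrast, is thin only transversally to its leaves, i.e.\ roughly in the $z$-direction near $x=\frac12$. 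Already for $f_0$ this fails. Take $\bb^{\ru}(\ul w^{(n_0+L(k+1))})$ to be the slab $x_1\le x\le x_2$; then the block is the parabolic shell $\{(x_1-\mu)/a_2\le z-a_1(x-\frac12)^2/a_2\le (x_2-\mu)/a_2\}$. A line parallel to the $x$-axis meets it in an arc whose length can be of order $\sqrt{x_2-x_1}$, not $x_2-x_1$, and for $C^1$-perturbations not even this is controlled. Your correct observation (u-arcs crossing $\bb^{\ru}(\ul w^{(n_0+L(k+1))})$ are short, and $f^{-2}$ is Lipschitz) applies only to $f^{-2}$-images of such arcs. In $\hh_{\ve_0}$ those are roughly parallel to the $z$-axis and transverse to the cs-leaves. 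The leaf $l$ is not of this form: $Df^2$ sends the $x$-direction near the fold almost to the $z$-direction. So $f^2(l)$ is not adaptable to $\boldsymbol{C}^{\ru}_{\ve}$, and \eqref{eqn_lamfkl} does not apply to it.

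Second, and more fundamentally, the tangency condition is imposed at the wrong time. The lemma needs $\wh\bx_{k+1}\in S^\tau_{\wh{\ul w}_{k+1}}$, i.e.\ a tangency at $f^{\wh n_{k+1}}(\wh\bx_{k+1})$. The requirement $f^{-2}(\wh\bx_{k+1})\in\tau(l)$ says nothing about that. In fact $f^{\wh n_k}(\wh\bx_k)$ itself already lies in $\tau(l)$, since it is a point of $S^\tau$. (It is not in general on $f^{\wh n_k}(S^\flat_{\wh{\ul w}_k})$, as you claim, because $S^\tau$ need not coincide with $S^\flat_{1/2}$ for $f\neq f_0$.) So your rule allows $f^{-2}(\wh\bx_{k+1})=f^{\wh n_k}(\wh\bx_k)$, and then nothing forces $\wh\bx_{k+1}$ into $S^\tau_{\wh{\ul w}_{k+1}}$. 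Your final sentence defines $\wh\bx_{k+1}$ both as an $f^2$-image and as a pull-back along $\wh{\ul w}_{k+1}$, which conceals this conflict.

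The missing idea is the $f$-invariance of $\mathcal{L}_\infty$, used in $\bb^{\ru}(\ul w^{(n_0+L(k+1))})$ rather than in $\hh_{\ve_0}$. Take a u-arc $l$ crossing $\bb^{\ru}(\ul w^{(n_0+L(k+1))})$ that is adaptable to $\boldsymbol{C}^{\ru}_\ve$, extends a leaf of $\mathcal{L}_\infty|_{\hh_{\wh{\ul w}_{k+1}}}$, and satisfies $f^{-2}(l)\ni f^{\wh n_k}(\wh\bx_k)$. The map $f^{\wh n_{k+1}}$ sends the sub-arc $l\cap\hh_{\wh{\ul w}_{k+1}}$ onto a leaf $l'$ of $\mathcal{L}_{(0;\infty)}$, and $\tau(l')\neq\eset$. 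Hence $l$ contains a point of $S^\tau_{\wh{\ul w}_{k+1}}$, and you take that point as $\wh\bx_{k+1}$. Then $\wh\bx_{k+1}$ and $f^{\wh n_k+2}(\wh\bx_k)$ both lie on $l$, and $|l|=O(\ul\lambda_{\ru}^{-(n_0+L(k+1))})$ by \eqref{eqn_lamfkl}. Applying $f^{-2}$ gives both $f^{-2}(\wh\bx_{k+1})\in\mathbb{U}^{\cs}_{k+1}$ and the distance estimate. Your pull-back construction from a leaf $F$ of the cs-curved block is then needed only to produce the initial point $\wh\bx_{k_0}$.
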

\begin{proof}
Points $\widehat{\boldsymbol{x}}_k$ $(k\geq k_0)$ required in this lemma are defined inductively.
By Lemma \ref{lem-3-1}\,\eqref{lem-3-1(2)}, there exists a leaf $F$ of $\mathcal{F}_f^{\cs}$ in  $\mathbb{U}_{\underline{w}^{(n_0+L(k_0+1))}}^{\mathrm{cs}}$ with 
$F\cap f^{\wh n_{k_0}}(S_{\widehat{\underline{w}}_{k_0}}^\flat)\neq \eset$.
Take a point $\by$ of $F\cap S^\tau$ and set $\wh \bx_{k_0}=f^{-\wh n_{k_0}}(\by)$, 
which is an element of $S_{\wh{\ul\omega}_{k_0}}^\tau$.

We note that
\[
S_{\wh{\ul{w}}_{k+1}}^\tau\subset \hh_{\wh{\ul{w}}_{k+1}}\subset \mathbb{B}^{\mathrm{u}}(\underline{w}^{(n_0+L(k+1))})
\]
for any $k\geq k_0$.
Consider a 1-dimensional $C^0$-foliation $\mathcal{L}_{k+1}$ 
on $\mathbb{B}^{\mathrm{u}}(\underline{w}^{(n_0+L(k+1))})$ such that 
each leaf $l$ is a $C^1$-arc extending a leaf of $\mathcal{L}_{\infty}|_{\hh_{\wh{\ul{w}}_{k+1}}}$ 
and adaptable to $\boldsymbol{C}_\ve^{\ru}$.
Since $\mathbb{U}_{\underline{w}^{(n_0+L(k+1))}}^{\mathrm{cs}}$ is a subset of $f^{-2}(\mathbb{B}^{\mathrm{u}}(\underline{w}^{(n_0+L(k+1))}))$, there exists a leaf 
$l$ of $\mathcal{L}_{k+1}$ such that $f^{-2}(l)$ contains $f^{\wh n_k}(\wh{\bx}_k)$.
See Figure \ref{f_Stau}.
\begin{figure}[hbtp]
\centering
\scalebox{0.6}{\includegraphics[clip]{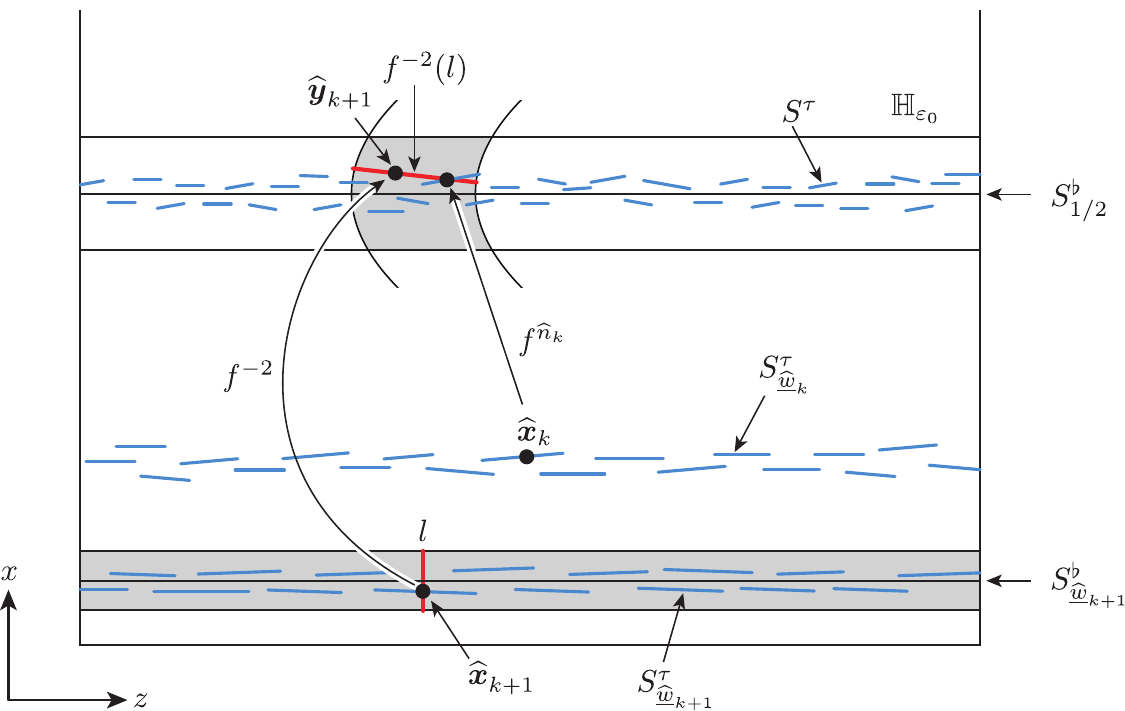}}
\caption{The lower shaded region represents $\mathbb{B}^{\mathrm{u}}(\underline{w}^{(n_0+L(k+1))})$ and the upper does $\mathbb{U}_{\underline{w}^{(n_0+L(k+1))}}^{\mathrm{cs}}$.}
\label{f_Stau}
\end{figure}
Since the leaf $l$ contains a leaf of $\mathcal{L}_{\infty}|_{\hh_{\wh{\ul{w}}_{k+1}}}$ as a sub-arc, 
$l$ has an element of $S_{\wh{\ul w}_{k+1}}^\tau$, which we set $\wh{\bx}_{k+1}$.
Since $f^{n_0+L(k+1)}(l)$ is a $C^1$-arc in $\bb$ compatible with $\boldsymbol{C}_\ve^{\ru}$ and connecting the bottom $\{x=-\ve_0\}$ with the top $\{x=1+\ve_0\}$ of $\bb$, 
it follows that $|f^{n_0+L(k+1)}(l)|\leq (1+\ve)(1+2\ve_0)$.
By \eqref{eqn_lamfkl}, $|l|$ and hence $|f^{-2}(l)|$ are $O(\underline{\lambda}_{\mathrm{u}}^{-(n_0+L(k+1))})$.
This completes the proof.
\end{proof}

Here we write 
\[
f^{-2}(\widehat{\boldsymbol{x}}_{k+1})=\wh\by_{k+1}\quad\text{and}\quad
\bu_k=\wh\by_{k+1}-f^{\widehat n_k}(\widehat{\boldsymbol{x}}_k).
\]
Then Lemma \ref{l_xkinS} implies that 
\begin{equation}\label{eqn_pseudo_orb}
\begin{split}
\bigl(\widehat{\boldsymbol{x}}_{k_0}, f(\widehat{\boldsymbol{x}}_{k_0}),\dots,&f^{\widehat{n}_{k_0}}(\widehat{\boldsymbol{x}}_{k_0}), 
f(\widehat{\boldsymbol{y}}_{k_0+1}),\\
&\widehat{\boldsymbol{x}}_{k_0+1}, f(\widehat{\boldsymbol{x}}_{k_0+1}),\dots,f^{\widehat{n}_{k_0+1}}(\widehat{\boldsymbol{x}}_{k_0+1}),f(\widehat{\boldsymbol{y}}_{k_0+2}),\dots\bigr)
\end{split}
\end{equation}
is an $O(\underline{\lambda}_{\mathrm{u}}^{-(n_0+L(k_0+1))})$-pseudo-orbit of $f$.

\subsection{First perturbations, revisited}\label{ss_first perturb}

First we review the $C^r$-case $(r\geq 2)$ in \cite{KNS} briefly.
Fix a sufficienly large $k_0>0$ and set $f^{\wh n_k}(\wh\bx_k)=(\wh x_k,\wh y_k,\wh z_k)$ and $d_k=\ul{\lambda}_{\ru}^{-(n_0+3k)}$ for $k\geq k_0$.
Let $(\mathbb{D}_k)_{k\geq k_0}$ be the sequence of mutually disjoint cubes in $\mathbb{H}_{\ve_0}$ 
defined as 
\[
\mathbb{D}_k=[\wh x_k-d_k, \wh x_k+d_k]\times 
[\wh y_k-d_k, \wh y_k+d_k]\times [\wh z_k-d_k, \wh z_k+d_k].
\]
We modify $f$ with a series of small perturbations supported on 
$(\mathbb{D}_k)_{k\geq k_0}$.
Then the resulting map $g$ is a $C^r$-diffeomorphism such that 
the $f$-pseudo-orbit \eqref{eqn_pseudo_orb} is an actual orbit of $g$.
The perturbation map $\alpha_k$ on $\mathbb{D}_k$ is a composition of the parallel 
translation along $\bu_k$ and the rotation at $\wh\by_k$ so that 
$\alpha_k(l(f^{\wh n_k}(\wh\bx_k)))$ is tangent to a leaf $F^{\cs}(\wh \by_{k+1})$ of $\cf_f^{\cs}$ at $\wh\by_{k+1}$ 
as illustrated in Figure 8.3 in \cite{KNS}.
Since $\|\bu_k\|=O(\underline{\lambda}_{\mathrm{u}}^{-(n_0+L(k+1))})$, 
$\|\bu_k\|/d_k\leq C\ul{\lambda}_{\ru}^{-(L-3)k}$ for some constant $C>0$  independent of 
$k$. 
As in \cite[Subsection 8.2]{KNS}, one can suppose that $g$ is arbitrarily $C^r$-close to $f$ 
for any fixed $L>9r$.

Now we return to the $C^1$-case.
We fix an integer $L>9$ and define the first perturbation of $f$ by a method similar to the $C^r$-case $(r\geq 2)$.
However the perturbation map $\wh\alpha_k:\mathbb{D}_k\too \bb$ in the $C^1$-case is just the parallel translation along $\bu_k$ and is not composed with any rotation at $\wh\by_{k+1}$, that is, 
$\wh\alpha_k$ is a $C^1$-diffeomorphism on $\mathbb{D}_k$ 
with $\wh\alpha_k(\bx)=\bx+\bu_k$.
This is one of the main differences compared to the $C^r$-case $(r\geq 2)$.
By  employing $C^1$-bump functions $\beta_k$ supported on $\mathbb{D}_k$  as in \cite[Subsection 8.2]{KNS}, we can have the $C^1$-diffeomorphism 
\[
g_*=f\circ \psi:M\too M
\]
arbitrarily $C^1$-close to $f$ if $k_0$ is sufficiently large, 
where $\psi:M\too M$ is the diffeomorphism defined as 
\begin{equation}\label{eqn_psi_n}
\psi(\bx)=\bx+\sum_{k=k_0}^\infty\beta_k(\bx)(\wh\alpha_k(\bx)-\bx)
\end{equation}
on $\bigcup_{k=k_0}^\infty\mathbb{D}_k$ and the identity on $M\setminus \bigcup_{k=k_0}^\infty\mathbb{D}_k$.
Then \eqref{eqn_pseudo_orb} is an actual orbit of $g_*$,

Lemma \ref{lem-3-1}\,\eqref{lem-3-1(0)} implies that  
$f^{\wh n_{l+1}}(\wh\bx_{l+1})$ is located to the left of $f^{\wh n_l}(\wh\bx_l)$
with respect to the leaves of $\cf_f^{\mathrm{cs}}$.
In the $C^r$-case $(r\geq 2)$, the sequence $(\mathbb{D}_k)_{k\geq k_0}$ 
converges a unique tangency point of $f^{-2}(F_{b}^{\rs})$ and a leaf of $\mathcal{L}_{(0,\infty)}$, where $F_{b}^{\rs}$ is the bottom of $\bb^{\ru}(\ul{w}^{(n_0)})$ as shown in Figure \ref{f_Bun0}.
On the other hand, this may not necessarily hold for the $C^1$-case.
We only know that any accumulation point of $(\mathbb{D}_k)_{k\geq k_0}$ in $\hh_{\ve_0}$ is contained in 
$f^{-2}(F_{b}^{\rs})\cap \hh_{\ve_0/2}$. 
\begin{figure}[hbtp]
\centering
\scalebox{0.6}{\includegraphics[clip]{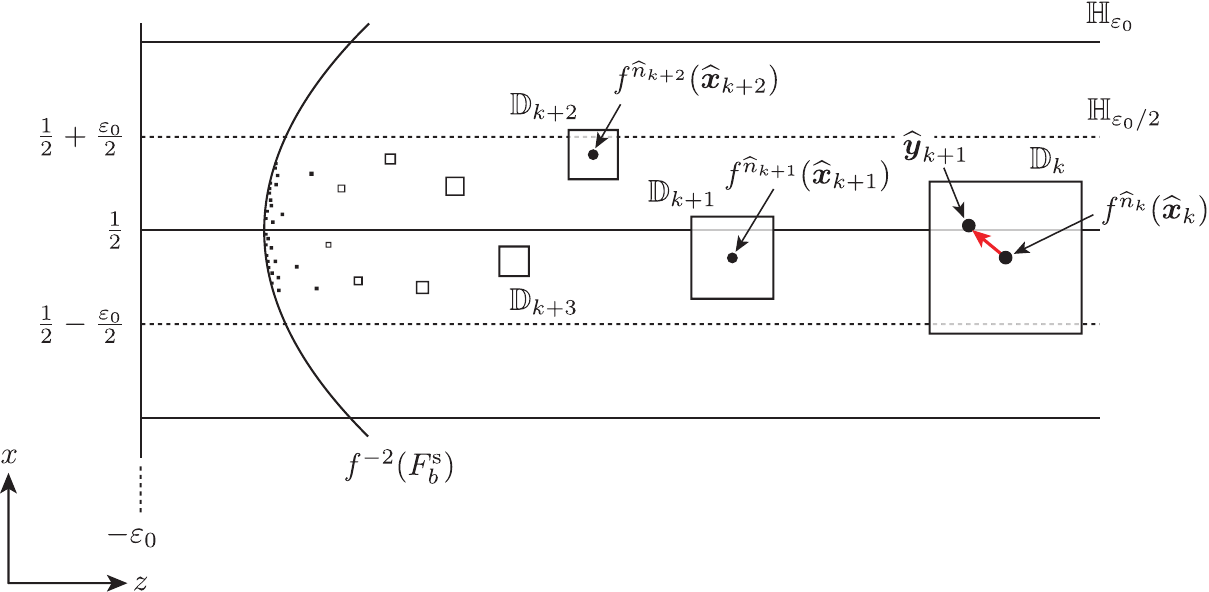}}
\caption{$\widehat\alpha_k(f^{\wh n_k}(\wh\bx_k))=\wh\by_{k+1}$.
The center $f^{\wh n_l}(\wh\bx_l)$ of $\mathbb{D}_l$ belongs to $\mathbb{H}_{\ve_0/2}$.
}
\label{f_Dk}
\end{figure}
However this fact is not required for proving the main theorem, so we will omit the proof.
Since $g_*$ and $f$ differ only in $\bigcup_{k=k_0}^\infty\mathbb{D}_k$, 
the main properties of $f$ still hold for $g_*$, e.g.\ 
\eqref{eqn_BBu}, \eqref{eqn_gg_gamma}, \eqref{eqn_lambda_e/2} and 
$\Lambda_{g_*}=\Lambda_f$, $\cf_{g_*}^{\rs}=\cf_f^{\rs}$.
In contrast, although $\cf_{g_*}^{\cs}$ is not equal to $\cf_f^{\cs}$, we will not use $\cf_{g_*}^{\cs}$ in any case.

\section{Solid cylinders and deviation of foliations}

\subsection{Solid cylinders}\label{ss_solid_sylinder}
We will define solid cylinders $D_k$ in $\bb$ centered at the point $\wh\bx_k$ given in  
Lemma \ref{l_xkinS}.
It is shown in Section \ref{S_second_perturb} that $D_k$ with sufficiently large $k$ is a wandering domain 
for some diffeomorphism $g$ arbitrarily $C^1$-close to $g_*$ and hence to $f$.

Let $\widehat{\underline{w}}_k=\underline{w}^{(n_0+Lk)}\underline{u}_k\underline{\iota}_k\underline{\gamma}^{(m_k)}$ $(k\geq 1)$ be the binary codes 
given in Lemma  \ref{lem-3-1}.
Recall that we have supposed that $|\ul{u}_k|=k^2$ in Subsection \ref{ss_binary_code}.

First we define the sequence $(\xi_k)_{k\geq 1}$ and $(\rho_k)_{k\geq 1}$ by 
\begin{subequations}
\begin{equation}\label{eqn_xi_zeta}
\xi_{k}=\sigma\biggl(\ol{\lambda}_{\rm u}^{\sum_{i=0}^{\infty} \tfrac{\widehat n_{k+i}}{2^{i}}} \biggr)^{-1}
\quad\text{and}\quad
\rho_{k}=\sigma^{-1}\xi_k^{\frac12},
\end{equation}
where $\sigma$ is a positive constant fixed below.
Since
\[
\sum_{i=0}^\infty \frac{\wh n_{k+i}}{2^i}>\wh n_k\sum_{i=0}^\infty \frac1{2^i}=2\wh n_k\quad\text{and}\quad 
\sum_{i=0}^\infty \frac{\wh n_{k+i}}{2^i}=\wh n_k+\frac12\sum_{i=0}^\infty \frac{\wh n_{k+1+i}}{2^i},
\]
it follows that
\begin{equation}\label{eqn_xi_zeta2}
\xi_{k}< \sigma \ol\lambda_{\mathrm{u}}^{\,-2\wh n_{k}}\quad \text{and}
\quad
\xi_{k}=\sigma^{\frac12}\ol\lambda_{\ru}^{\,-\wh n_k}\xi_{k+1}^{\frac12}.
\end{equation}
Thus we also have 
\begin{equation}\label{eqn_xi_zeta3}
\rho_k<\sigma^{-1}\bigl(\sigma\ol\lambda_{\ru}^{\,-2\wh n_k}\bigr)^\frac12
=\sigma^{-\frac12}\ol\lambda_{\ru}^{\,-\wh n_k}
\quad\text{and}\quad 
\frac{\rho_k}{\xi_{k+1}}=\ol\lambda_{\ru}^{\, -2\wh n_k}\xi_k^{-\frac32}.
\end{equation}
\end{subequations}

For any $\wh\bx\in F^{\rs}(\wh\bx_k)$, let $l(\widehat{\boldsymbol{x}})$ be the leaf of $\mathcal{L}_{(\widehat n_k;\infty)}$ containing $\widehat{\boldsymbol{x}}$.
Note that $l(\widehat{\boldsymbol{x}})$ is a maximal $C^1$-arc in $\mathbb{H}_{\widehat{\underline{w}}_k}$, which is divided by $\widehat{\boldsymbol{x}}$ into sub-arcs $l^+(\widehat{\boldsymbol{x}})$, $l^-(\widehat{\boldsymbol{x}})$.
Since $f^{\wh n_k}(\wh \bx)$ and $f^{\wh n_k}(\wh \bx_k)$ are contained 
in the same leaf $F^{\rs}(f^{\wh n_k}(\wh \bx_k))$ of $\mathcal{F}_f^{\rs}|_{\hh_{\ve_0}}$, $f^{\wh n_k}(\wh \bx)$ as well as $f^{\wh n_k}(\wh \bx_k)$ is a point 
in $\hh_{\ve_0/2}$.
So each of $f^{\wh n_k}(l^\pm(\wh \bx))$ contains a component of $f^{\wh n_k}(l(\widehat{\boldsymbol{x}}))
\setminus \hh_{\ve_0/2}$.
This implies that 
$|f^{\wh n_k}(l^\pm(\wh \bx))|\geq \ve_0/2$ 
and hence by \eqref{eqn_lamfkl} 
\begin{equation}\label{eqn_delta_kx}
\delta_{k}(\wh\bx):= \min\bigl\{|l^+(\widehat{\boldsymbol{x}})|, |l^-(\widehat{\boldsymbol{x}})|\bigr\}
\geq 
\frac12 \ve_0\ol\lambda_{\mathrm{u}}^{\,-\widehat n_{k}}
\end{equation}
for any $\wh\bx\in F^{\rs}(\wh\bx_k)$.
We set 
\begin{equation}\label{eqn_delta_k}
\delta(\wh\bx_k)=\delta_k
\end{equation}
for short.

Let $J_k$ be the sub-arc of $l(\widehat{\boldsymbol{x}}_k)$ centered at 
$\widehat{\boldsymbol{x}}_k$ and of length $2\xi_k$.
Let $\bx_k^+$, $\bx_k^-$ be the end points of $J_k$ and $\gg^{\ru}(J_k)$ the closure of 
the component of $\bb\setminus (F^{\rs}(\bx_k^+)\cup F^{\rs}(\bx_k^-))$ containing $\wh\bx_k$.
For $\wh\bx\in F^{\rs}(\wh \bx_k)$, we set $J_k(\wh\bx)=l(\wh\bx)\cap \gg^{\ru}(J_k)$, 
which is divided by $\wh\bx$ into two sub-arcs $J_k^\pm(\wh\bx)$.
In particular $J_k^\pm=J_k^\pm(\wh\bx_k)$.
See Figure \ref{f_GJk}.
\begin{figure}[hbtp]
\centering
\scalebox{0.6}{\includegraphics[clip]{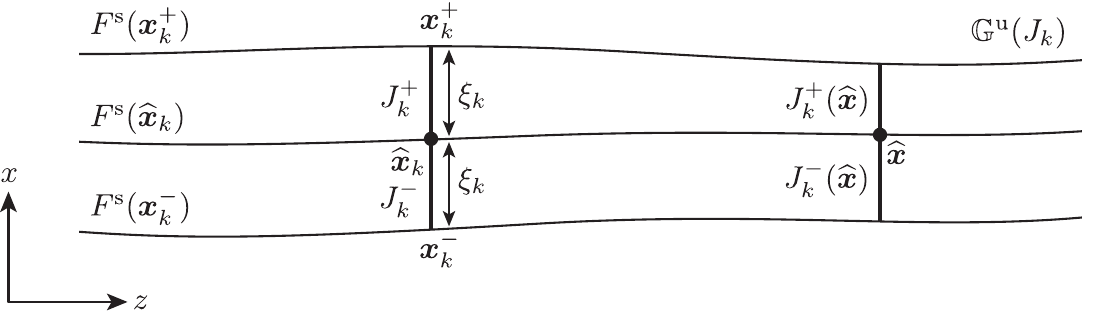}}
\caption{$J_k=J_k^+\cup J_k^-$.}
\label{f_GJk}
\end{figure}
\begin{lem}\label{l_xiJk}
For any $\wh\bx\in F^{\rs}(\wh \bx_k)$,
\[
(1+\ve)^{-1}\xi_k(\ol\lambda_{\ru}^{\,-1}\ul\lambda_{\ru})^{\wh n_k}<|J_k^\pm(\wh\bx)|<
(1+\ve)\xi_k(\ol\lambda_{\ru}\ul\lambda_{\ru}^{\,-1})^{\wh n_k}
\]
holds.
\end{lem}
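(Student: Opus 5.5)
The idea is to push everything forward by $f^{\wh n_k}$, where the stable foliation makes the picture simple, and then pull back with the expansion estimate \eqref{eqn_lamfkl}.

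First I will show that $f^{\wh n_k}$ maps $\gg^{\ru}(J_k)$ into the region between two leaves of $\cf_f^{\rs}$. Since $\cf_f^{\rs}$ is $f$-invariant on the relevant blocks, we have $f^{\wh n_k}(F^{\rs}(\bx_k^\pm))\subset F^{\rs}(f^{\wh n_k}(\bx_k^\pm))$. Hence $f^{\wh n_k}(\gg^{\ru}(J_k))$ lies between the leaves $F^{\rs}(f^{\wh n_k}(\bx_k^+))$ and $F^{\rs}(f^{\wh n_k}(\bx_k^-))$, inside $\hh_{\ve_0}$. There the leaves are flat planes $\{x=\text{const}\}$ by \eqref{F2}. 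So $f^{\wh n_k}(\gg^{\ru}(J_k))\cap\hh_{\ve_0}$ is contained in a slab $\{x^-\le x\le x^+\}$, where $x^\pm$ are the $x$-coordinates of $f^{\wh n_k}(\bx_k^\pm)$ and the slab contains $f^{\wh n_k}(\wh\bx_k)$.

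Next I estimate the arcs on the image side. By \eqref{eqn_lamfkl}, $f^{\wh n_k}(J_k^\pm)$ has length strictly between $\ul\lambda_{\ru}^{\wh n_k}\xi_k$ and $\ol\lambda_{\ru}^{\,\wh n_k}\xi_k$. This arc is adaptable to $\boldsymbol{C}^{\ru}_\ve$, so its $x$-extent $|x^\pm-\wh x|$ is between $(1+\ve)^{-1}$ times its length and its length. Here $\wh x$ is the common $x$-coordinate of $f^{\wh n_k}(\wh\bx)$ and $f^{\wh n_k}(\wh\bx_k)$; they share it because both lie on the flat leaf $F^{\rs}(f^{\wh n_k}(\wh\bx_k))$. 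Now $f^{\wh n_k}(J_k^\pm(\wh\bx))$ is a u-adaptable arc running from the plane $\{x=\wh x\}$ to the plane $\{x=x^\pm\}$. Its length is therefore between $|x^\pm-\wh x|$ and $(1+\ve)|x^\pm-\wh x|$.

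Combining these, the length of $f^{\wh n_k}(J_k^\pm(\wh\bx))$ lies between $(1+\ve)^{-1}\ul\lambda_{\ru}^{\wh n_k}\xi_k$ and $(1+\ve)\ol\lambda_{\ru}^{\,\wh n_k}\xi_k$. Applying \eqref{eqn_lamfkl} once more to pull back by $f^{-\wh n_k}$ divides these bounds by $\ol\lambda_{\ru}^{\,\wh n_k}$ and $\ul\lambda_{\ru}^{\wh n_k}$ respectively. This gives exactly the stated inequality.

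The main obstacle is the first step: making sure the forward images stay in $\hh_{\ve_0}$, where the leaves are flat. The arcs must not leave $\hh_{\ve_0}$ before reaching the bounding leaves. I would handle this with \eqref{eqn_delta_kx} and \eqref{eqn_xi_zeta2}. Since $\xi_k\ll\ol\lambda_{\ru}^{\,-\wh n_k}$, the image lengths are of order $\ol\lambda_{\ru}^{\,\wh n_k}\xi_k\ll\ve_0/2$. So $J_k^\pm(\wh\bx)$ is a proper sub-arc of $l^\pm(\wh\bx)$, and its image stays inside $\hh_{\ve_0}$, starting from a point of $\hh_{\ve_0/2}$. This uses $\sigma$ small. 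If the flatness argument is unavailable, the fallback is that $\cf^{\rs}_f$ is $C^1$-close to flat, which loses only the same factor $(1+\ve)$.
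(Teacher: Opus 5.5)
Your proposal is correct and follows essentially the same route as the paper. Push forward by $f^{\wh n_k}$, use the flatness \eqref{F2} of the leaves in $\hh_{\ve_0}$ together with $\boldsymbol{C}^{\ru}_\ve$-adaptability to get $(1+\ve)^{-1}<|f^{\wh n_k}(J_k^\pm(\wh\bx))|/|f^{\wh n_k}(J_k^\pm)|<1+\ve$, then apply \eqref{eqn_lamfkl} twice; your additional check via \eqref{eqn_delta_kx} and \eqref{eqn_xi_zeta2} that the image arcs reach the bounding flat leaves inside $\hh_{\ve_0}$ is a detail the paper leaves implicit.
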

\begin{proof}
By the property \eqref{F2} of $\cf_f^{\rs}$, 
$F^{\rs}(f^{\wh n_k}(\wh\bx_k))$ and $F^{\rs}(f^{\wh n_k}(\bx_k^\pm))$ are 
parallel to the $yz$-plane.
Since moreover $f^{\wh n_k}(J_k)$ and $f^{\wh n_k}(J_k(\wh\bx))$ 
are compatible with $\boldsymbol{C}_\ve^{\ru}$, 
\[
\frac1{1+\ve}<\frac{|f^{\wh n_k}(J_k^\pm(\wh \bx))|}{|f^{\wh n_k}(J_k^\pm)|}<1+\ve.
\]
By \eqref{eqn_lamfkl} and $|J_k^\pm|=\xi_k$, 
\begin{align*}
&\ol{\lambda}_{\mathrm{u}}^{\,-\wh n_k}|f^{\wh n_k}(J_k^\pm(\wh \bx))|<
|J_k^\pm(\wh \bx)|<\ul{\lambda}_{\mathrm{u}}^{-\wh n_k}|f^{\wh n_k}(J_k^\pm(\wh \bx))|, \\
&\ol{\lambda}_{\mathrm{u}}^{\,-\wh n_k}|f^{\wh n_k}(J_k^\pm)|<
\xi_k<\ul{\lambda}_{\mathrm{u}}^{-\wh n_k}|f^{\wh n_k}(J_k^\pm)|.
\end{align*}
These inequalities complete the proof.
\end{proof}

Let $k_0$ be the positive integer given in Subsection \ref{ss_first perturb}.
By \eqref{eqn_xi_zeta} and \eqref{eqn_xi_zeta2}, $\rho_{k+1}=\sigma^{-1}\xi_{k+1}^{\frac12}=\sigma^{-3/2}\ol\lambda_{\ru}^{\,\wh n_k}\xi_k$.
Since $|J_k|=2\xi_k$, we have by \eqref{eqn_lamDf} 
\[
|g_*^{\wh n_k+2}(J_k)|<c\ol\lambda_{\mathrm{u}}^{\,\wh n_k}\xi_k=c\sigma^{3/2}\rho_{k+1}
\]
for any $k\geq k_0$ and some constant $c>0$ independent of $k$.
So one can choose $\sigma$ so that 
\begin{equation}\label{eqn_gJ}
|g_*^{\wh n_k+2}(J_k)|<\frac13 \rho_{k+1}.
\end{equation}

Let $U_{\rho_k}(\boldsymbol{x})$ be the disk in $F^{\mathrm{s}}(\boldsymbol{x})$ centered at $\boldsymbol{x}$ and of radius $\rho_k$.
We define the solid cylinder $D_k$ $(k\geq k_0)$ in $\mathbb{B}$ by
\begin{equation}\label{eqn_Dk}
D_k=\bigcup_{\boldsymbol{x}\in J_k}U_{\rho_k}(\boldsymbol{x}).
\end{equation}
We suppose that $J_k$ is the core of $D_k$.
See Figure \ref{f_solid_cyl}.
\begin{figure}[hbtp]
\centering
\scalebox{0.6}{\includegraphics[clip]{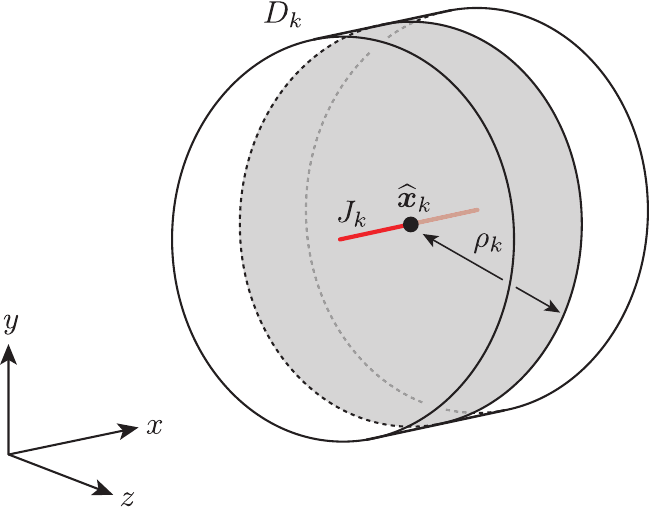}}
\caption{The shaded disk represents $U_{\rho_k}(\wh\bx_k)$.}
\label{f_solid_cyl}
\end{figure}

\subsection{Deviation of foliations}\label{ss_deviation}
Deviation is a new notion not introduced in \cite{KNS}.

For $\bx$, $\by\in \bb$, the \emph{deviation} of $\cf_f^{\rs}$ between 
$\bx$ and $\by$ is the dihedral angle of $D\tau_{\by-\bx}(\bx)(T_{\bx}F^{\rs}(\bx))$ 
and $T_{\by}F^{\rs}(\by)$ at $\by$, where $\tau_{\by-\bx}:\bb\too \rr^3$ is the parallel translation with $\tau_{\by-\bx}(\bx)=\by$.
The deviation is denoted by $\mathrm{dev}_{\cf_f^{\rs}}(\bx,\by)$.
From the definition, we know that the deviation is symmetric, that is, 
$\mathrm{dev}_{\cf_f^{\rs}}(\bx,\by)=\mathrm{dev}_{\cf_f^{\rs}}(\by,\bx)$.
The deviation for the cs-foliation $\cf_f^{\cs}=f^{-2}(\cf_f^{\rs})\cap \hh_{\ve_0}$ and that for the 1-dimensional foliation $\mathcal{L}_{(0,\infty)}$ on $\hh_{\ve_0}$ are defined similarly.

For any $\by\in \hh_{\ve_0}$, 
let $B_d(\by)$ be the closed ball in $\bb$ centered at $\by$ and of radius $d$.
It is well known that the tangent directions of leaves of 
the $C^0$-foliations 
$\cf_f^{\cs}$ and $\cf_f^{\rs}$ vary continuously, for 
example see \cite[Chapter 6]{Sh}.
Since a similar property holds for leaves of the $C^0$-foliation $\mathcal{L}_{(0;\infty)}$, 
we have the following lemma immediately.

\begin{lem}\label{l_e*}
For any $d_0>0$, there exists $\ve_*=\ve_*(d_0)>0$ such that  $\lim\limits_{d_0\to 0}\ve_*=0$ and, for any $\by_0$, $\by_1$ in 
$\hh_{\ve_0}$ with $\|\by_0-\by_1\|<d_0$, 
both $\mathrm{dev}_{\cf_f^{\cs}}(\by_0,\by_1)$ and $\mathrm{dev}_{\mathcal{L}_{(0;\infty)}}(\by_0,\by_1)$ are less than $\ve_*$.
Moreover, for any $\bx_0$, $\bx_1$ in $\bb$ with $\|\bx_0-\bx_1\|<d_0$, 
$\mathrm{dev}_{\cf_f^{\rs}}(\bx_0,\bx_1)$ is also less than $\ve_*$.
\end{lem}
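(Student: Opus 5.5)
The lemma is a uniform-continuity statement, so the plan is a compactness argument. The key point is that the tangent plane fields of $\cf_f^{\rs}$ and $\cf_f^{\cs}$, and the tangent line field of $\mathcal{L}_{(0;\infty)}$, are continuous on compact sets. I would treat the three foliations separately and at the end take $\ve_*(d_0)$ to be the maximum of the three moduli.

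For $\cf_f^{\rs}$, I would first define the map $E^{\rs}:\bb\too \mathrm{Gr}_2(\rr^3)$, $\bx\mapsto T_{\bx}F^{\rs}(\bx)$. Continuity of this map is exactly the statement that leaf tangent directions of the $C^0$-foliation with $C^1$-leaves vary continuously. I would take this from the construction in \cite[Subsection 2.4]{PT93} together with \cite[Chapter 6]{Sh}: the leaves are obtained as limits of backward iterates of surfaces adaptable to $\boldsymbol{C}^{\cs}_\ve$, and the cone contraction makes the tangent planes converge uniformly. Because $\bb$ is compact, $E^{\rs}$ is uniformly continuous. The deviation $\mathrm{dev}_{\cf_f^{\rs}}(\bx,\by)$ is precisely the angle distance between $E^{\rs}(\bx)$ and $E^{\rs}(\by)$, since parallel translation in Euclidean coordinates does not change the plane as a subspace. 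So I would set
\[
\ve_*^{\rs}(d_0)=\sup\{\angle(E^{\rs}(\bx_0),E^{\rs}(\bx_1))\,;\,\|\bx_0-\bx_1\|\le d_0\}.
\]
By uniform continuity, $\ve_*^{\rs}(d_0)\to 0$ as $d_0\to 0$.

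For $\cf_f^{\cs}=f^{-2}(\cf_f^{\rs})\cap\hh_{\ve_0}$, I would write $T_{\by}F^{\cs}(\by)=Df^{-2}(f^2(\by))\,E^{\rs}(f^2(\by))$. This field is a composition of continuous maps, since $f$ is $C^1$ and $Df^{-2}$ acts continuously on the Grassmannian. It is therefore continuous on the compact set $\hh_{\ve_0}$, and the same sup-modulus argument gives $\ve_*^{\cs}(d_0)$.

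For $\mathcal{L}_{(0;\infty)}$, I would use the property noted in Subsection \ref{ss_inv_cone}: every leaf $l'$ lying in a small neighborhood $\mathcal{N}(l)$ is $C^1$-$\ve_*$-close to $l$. Combined with continuity of the tangent line along each single $C^1$ leaf, this gives continuity of the line field $\by\mapsto T_{\by}l(\by)$ on $\hh_{\ve_0}$. One checks this by splitting: compare $\by_0$ with the nearby point of $l(\by_1)$ first, then move along the leaf. Compactness then yields $\ve_*^{\mathcal{L}}(d_0)$.

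I expect the main obstacle to be the continuity of the tangent fields themselves, especially for $\mathcal{L}_{(0;\infty)}$. That foliation is only known leafwise to be $C^1$, it is defined on $\hh_{[0]}=\hh_{\ve_0}$, and it may not be $C^1$-conjugate to a product. There I would lean on the uniform $C^1$-closeness of nearby leaves, which comes from its construction as a limit under cone contraction; everything after that is routine compactness.
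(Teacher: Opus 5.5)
Your proposal is correct and is essentially the paper's own argument: the paper obtains the lemma ``immediately'' from the well-known continuity of the tangent fields of $\cf_f^{\rs}$ and $\cf_f^{\cs}$, together with the analogous property for $\mathcal{L}_{(0;\infty)}$, and your compactness and uniform-continuity argument (including the leaf-comparison step for $\mathcal{L}_{(0;\infty)}$) simply fills in those details. One small fix: your supremum only gives a non-strict bound and could vanish, whereas the statement asks for a strict inequality with $\ve_*>0$, so take, for example, $\ve_*(d_0)=\max\{\ve_*^{\rs}(d_0),\ve_*^{\cs}(d_0),\ve_*^{\mathcal{L}}(d_0)\}+d_0$.
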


For any $C^1$-arcs $l$, $l'$ (resp.\ $C^1$-surface $F$) in $\bb$ with $l\cap l'\ni \bx$ (resp.\ 
$l\cap F\ni \bx$), 
the angle of $l$ and $l'$ (resp.\ $l$ and $F$) at $\bx$ is denoted by 
$\angle_{\bx}\langle l,l'\rangle$ $\bigl(\text{resp.}\  \angle_{\bx}\langle l,F\rangle\bigr)$.

Since $|J_k|=2\xi_k<2\sigma\ol\lambda_{\ru}^{\,-2\wh n_k}$ by \eqref{eqn_xi_zeta2} for any $k\geq 1$, Lemma \ref{l_xkinS} implies that there exists $d_k>0$ with $\lim\limits_{k\to \infty}d_k=0$ 
and such that $B_{d_k/2}(\wh\by_{k+1})$ contains $f^{\wh n_k}(J_k)\cup g_*^{\wh n_k}(J_k)$.
We set 
\[
\ve_k=\ve_*(d_k).
\]
Then, by Lemma \ref{l_e*}, $\lim\limits_{k\to \infty}\ve_k=0$.
By \eqref{eqn_xi_zeta3} and \eqref{eqn_delta_kx}, we may assume that 
\begin{equation}\label{eqn_delta_rho}
\frac{\delta_k}3>\sqrt{\ve_k}\rho_k 
\end{equation}
holds for any $k\geq k_0$ if necessary replacing $k_0$ by a larger positive integer.

Let $\by_0$ be any point of $g_*^{\wh n_k}(J_k)$ and $\by$ any point of $B_{d_k/2}(\wh\by_{k+1})$.
We write $\by_0'=\psi^{-1}(\by_0)$ for $\psi$ of \eqref{eqn_psi_n}.
Since $\mathcal{L}_\infty$ is $f$-invariant, $f^{\wh n_k}(J_k)$ is a sub-arc of 
$f^{\wh n_k}(l(\wh{\bx}_k))=l(f^{\wh n_k}(\wh{\bx}_k))$.
By Lemma \ref{l_xkinS}, $f^{\wh n_k}(\wh\bx_k)$ is an element of $S^\tau$.
This means that $l(f^{\wh n_k}(\wh \bx_k))$ is tangent to $F^{\cs}(f^{\wh n_k}(\wh \bx_k))$ 
at $f^{\wh n_k}(\wh \bx_k)$.
Since moreover $g_*^{\wh n_k}(J_k)$ is parallel to $f^{\wh n_k}(J_k)$, 
\begin{align*}
\angle_{\by_0}\langle g_*^{\wh n_k}(J_k), \tau_{\by_0-\by}F^{\cs}(\by)\rangle
&=\angle_{\by_0'}\langle f^{\wh n_k}(J_k), \tau_{\by_0'-\by}F^{\cs}(\by)\rangle\\
&\leq \angle_{\by_0'}\langle l(\by_0'), \tau_{\by_0'-f^{\wh n_k}(\wh \bx_k)}
l(f^{\wh n_k}(\wh \bx_k))\rangle\\
&\qquad\qquad+
\angle_{f^{\wh n_k}(\wh \bx_k)}\langle l(f^{\wh n_k}(\wh \bx_k)),\tau_{f^{\wh n_k}(\wh \bx_k)-\by}F^{\cs}(\by)\rangle\\
&
<\ve_k+
\angle_{f^{\wh n_k}(\wh \bx_k)}\langle l(f^{\wh n_k}(\wh \bx_k)),\tau_{f^{\wh n_k}(\wh \bx_k)-\by}F^{\cs}(\by)\rangle\\
&\leq \ve_k+\mathrm{dev}_{\cf_f^{\cs}}(f^{\wh n_k}(\wh \bx_k),\by)<2\ve_k.
\end{align*}
See Figure \ref{f_angle_gF}\,(a).
\begin{figure}[hbtp]
\centering
\scalebox{0.6}{\includegraphics[clip]{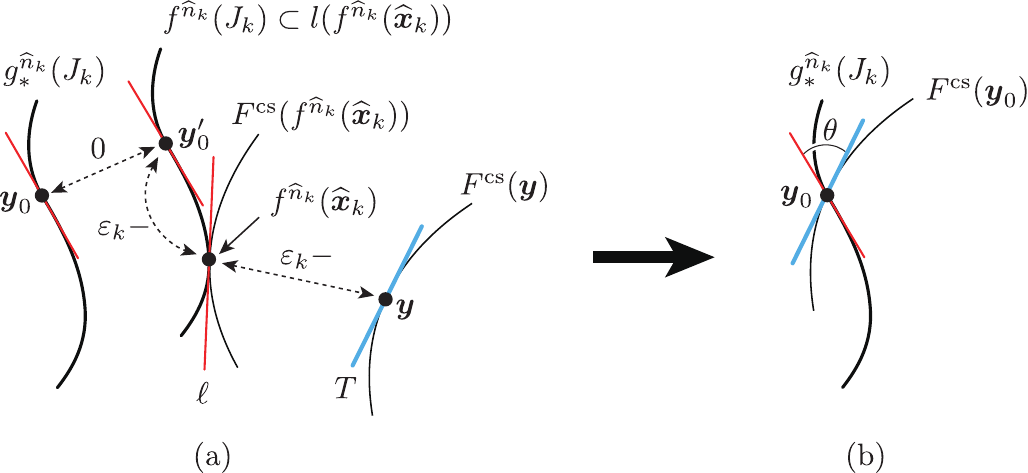}}
\caption{Situation in $\hh_{\ve_0}$.
(a) Each label `$\ve_k-$' means that the difference in direction of tangent lines at the corresponding points is less than $\ve_k$.
$\ell$ is the line tangent to both $f^{\wh n_k}(J_k)$ and $F^{\cs}(f^{\wh n_k}(\wh \bx_k))$ 
at $f^{\wh n_k}(\wh \bx_k)$.
$T$ is the plane tangent to $F^{\cs}(\by)$ at $\by$.
(b) $\theta<2\ve_k$.}
\label{f_angle_gF}
\end{figure}
In particular, in the case when $\by=\by_0$, 
\[
\angle_{\by_0}\langle g_*^{\wh n_k}(J_k), F^{\cs}(\by_0)\rangle<2\ve_k
\]
holds.
See Figure \ref{f_angle_gF}\,(b).
For any $\bx\in  g_*^{\wh n_k+2}(J_k)$, 
we apply the inequality by setting $\by_0=f^{-2}(\bx)$.
Since the differential $Dg_*^2(\bx)$ is a non-singular linear map 
which varies uniformly continuously on $\hh^2_{\ve_0}$, 
there exists 
a constant $c_0>0$ independent of $k$ and satisfying 
\begin{equation}\label{eqn_angle_gJS}
\begin{split}
\angle_{\bx}\langle g_*^{\wh n_k+2}(J_k), \tau_{\bx-\bx_1}F^{\rs}(\wh\bx_{k+1})\rangle
&< \angle_{\bx}\langle g_*^{\wh n_k+2}(J_k), F^{\rs}(\bx)\rangle+
\mathrm{dev}_{\cf_f^{\rs}}(\bx,\bx_1)\\
&<c_0\ve_k+\ve_k=(c_0+1)\ve_k,
\end{split}
\end{equation}
where $\bx_1$ is the point of $F^{\rs}(\wh\bx_{k+1})$ such that $\bx-\bx_1$ is 
parallel to the $x$-axis.
See Figure \ref{f_angle_gFs}.
\begin{figure}[hbtp]
\centering
\scalebox{0.6}{\includegraphics[clip]{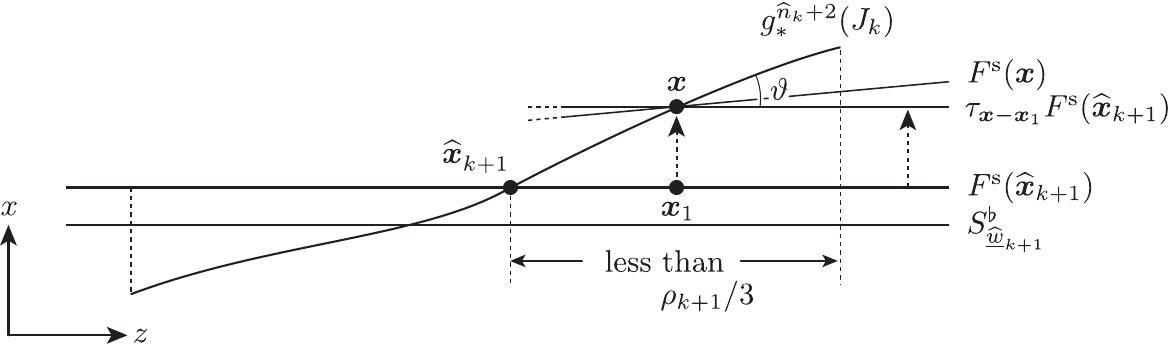}}
\caption{Situation in $\bb^{\ru}(\ul w^{(n_0+L(k+1)})$. $\vartheta<(c_0+1)\ve_k$.}
\label{f_angle_gFs}
\end{figure}
Since $g_*^2=f^2\circ \psi$ and $\psi$ is arbitrarily $C^1$-close to the identity, 
one can take $c_0$ commonly for any $f\in \mathcal{U}_0$.
It follows from \eqref{eqn_gJ} and \eqref{eqn_dist_xFA} in Appendix \ref{S_relative} 
that 
\begin{equation}\label{eqn_dist_xF}
\max_{\,\bx\in g_*^{\wh n_k+2}(J_k)}\mathrm{dist}(\bx, F^{\rs}(\wh\bx_{k+1}))<\frac13 (2c_0+1)\ve_k\rho_{k+1}
\end{equation}
if $\ve$ is sufficiently small and $k_0$ is sufficiently large.

\begin{remark}
Recall that the length of the core $J_{k+1}$ of the solid cylinder $D_{k+1}$ given by \eqref{eqn_Dk} is $2\xi_{k+1}$.
By \eqref{eqn_xi_zeta}, 
\[
\dfrac{\xi_{k+1}}{(2c_0+1)\ve_k\rho_{k+1}}=(2c_0+1)^{-1}\ve_k^{-1}\sigma\sqrt{\xi_{k+1}}.
\]
The sequence $(\sqrt{\xi_{k+1}})_{k\geq k_0}$ decreases exponentially.
On the other hand, from the definition, such rapid decay may not be expected 
for $(\ve_k)_{k\geq k_0}$.
So it would be difficult to exclude the unfavorable case that  
$g_*^{\wh n_k+2}(J_k)$ is not contained in $D_{k+1}$.
See Figure \ref{f_C1Cr}\,(b).
Therefore we need to employ solid cylinders $\wh D_{k+1}$ taller than $D_{k+1}$ and 
containing $g_*^{\wh n_k+2}(J_k)$, see Figure \ref{f_Dhat} below.
\end{remark}

\section{Second perturbations by pressing operations}\label{S_second_perturb}

In this section, we introduce a new perturbation not used in \cite{KNS}.
To help the reader understand the operation, we will first consider a simplified model.

\subsection{\bf A product model}

Suppose that $k\geq k_0$.
Let $\wh E_{k+1}$ be the product solid cylinder $[-\delta_{k+1},\delta_{k+1}]\times U_{\rho_{k+1}}^\flat$ 
and  $\ve_k>0$ a constant satisfying \eqref{eqn_delta_rho}, 
where $U_{\rho_{k+1}}^\flat=\{(y,z)\in \rr^2\,;\, \sqrt{y^2+z^2}\leq \rho_{k+1}\}$.
By using standard arguments on bump functions, one can have  a $C^1$-function 
$\beta_{k+1}:\rr^3\too \rr$ satisfying the following conditions.
\begin{enumerate}[({B}1)]
\makeatletter
\renewcommand{\p@enumi}{B}
\makeatother
\item
$\beta_{k+1}(\bx)=0$ if $\bx\in \rr^3\setminus \mathrm{Int}\wh E_{k+1}$,\label{B1}
\item
$\beta_{k+1}(\bx)\geq 0$ if $\bx\in \wh E_{k+1}$ and $\beta_{k+1}(\bx)=1$ if 
$\bx\in \wh E_{k+1}\setminus \mathcal{N}(\partial \wh E_{k+1})$,\label{B2}
\item
$\left|\dfrac{\partial \beta_{k+1}}{\partial u}(\bx)\right|\leq \dfrac{2}{\sqrt{\ve_k}\rho_{k+1}}$ 
if $\bx\in \wh E_{k+1}$ for $u=x,y,z$,\label{B3}
\end{enumerate}
where $\mathcal{N}(\partial \wh E_{k+1})$ is the $\sqrt{\ve_k}\rho_{k+1}$-neighborhood of $\partial \wh E_{k+1}$ in $\wh E_{k+1}$.

Consider a $C^1$-curve $\ell$ in $\wh E_{k+1}$ parametrized by 
$\ell(t)=(\eta(t),0, t)$ $(-\rho_{k+1}\leq t\leq \rho_{k+1})$, 
where $\eta:[-\rho_{k+1},\rho_{k+1}]\too \rr$ is a $C^1$-function with $\eta(0)=0$ 
and 
\begin{equation}\label{eqn_ae*}
|\eta'(t)|\leq \ve_k
\end{equation}
for any $t$.
Then we have 
\begin{equation}\label{eqn_ae*2}
|\eta(t)|\leq \ve_k|t|
\end{equation}
if $-\rho_{k+1}\leq t\leq \rho_{k+1}$.
Let $\ell_{1/3}$ be the sub-arc of $\ell$ parametrized by $\ell(t)$ with $-\rho_{k+1}/3\leq t\leq \rho_{k+1}/3$.
By \eqref{eqn_delta_rho} and \eqref{eqn_ae*2}, $\ell$ is disjoint from the $\sqrt{\ve_{k}}\rho_{k+1}$-neighborhood of $\{-\delta_{k+1},\delta_{k+1}\}\times U_{\rho_{k+1}}^\flat$ in $\wh E_{k+1}$ 
and $\ell_{1/3}$ is contained in $\wh E_{k+1}\setminus \mathcal{N}(\partial \wh E_{k+1})$, 
see \eqref{eqn_delta_k} for $\delta_{k+1}$.
Let $H_{k+1}:\wh E_{k+1}\too \rr^3$ be the embedding defined as 
$H_{k+1}(x,y,z)=(x-\eta(z),y,z)$.
Then the $C^1$-map $\varPhi_{k+1}:\rr^3\to \rr^3$ is 
defined as
\[
\varPhi_{k+1}=
\begin{cases}
\beta_{k+1}(\bx)(H_{k+1}(\bx)-\bx)+\bx&\text{if}\quad \bx\in \wh E_{k+1},\\
\bx&\text{if}\quad \bx\in \rr^3\setminus \wh E_{k+1}.
\end{cases}
\]
From the definition, we know that 
\[
\varPhi_{k+1}(\ell_{1/3})\subset (\wh E_{k+1}\setminus \mathcal{N}(\partial \wh E_{k+1}))\cap \{(0,y,z)\,|\, (y,z)\in \rr^2\}.
\]
See Figure \ref{f_Ehat}.
\begin{figure}[hbtp]
\centering
\scalebox{0.6}{\includegraphics[clip]{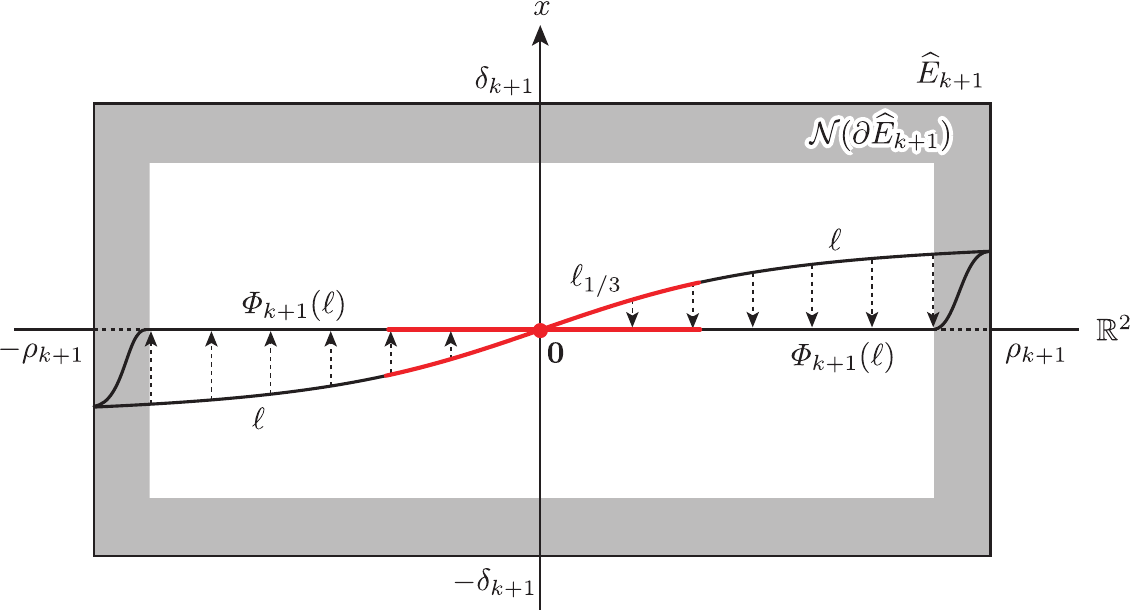}}
\caption{A product model.}
\label{f_Ehat}
\end{figure}
Since $\varPhi_{k+1}(\bx)-\mathrm{id}_{\rr^3}(\bx)=\beta_{k+1}(\bx)(-\eta(z),0,0)$ 
for $\bx\in \wh E_{k+1}$, 
it follows form \eqref{B3}, \eqref{eqn_ae*}, and \eqref{eqn_ae*2} that 
\begin{align*}
\mathrm{dist}_{C^1}\left(\varPhi_{k+1},\mathrm{id}_{\rr^3}\right)
&=\|\beta_{k+1}(-\eta,0,0)\|_\infty\\
&\qquad
+\sum_{u\in \{x,y,z\}}\left\|\frac{\partial \beta_{k+1}}{\partial u}(-\eta,0,0)\right\|_\infty
+\|\beta_{k+1}(-\eta',0,0)\|_\infty\\
&\leq 
\ve_k \rho_{k+1}+3\cdot \frac{2}{\sqrt{\ve_k}\rho_{k+1}}\ve_k\rho_{k+1}+
\ve_k<7\sqrt{\ve_k}
\end{align*}
if $\ve_k(\rho_{k+1}+1)<\sqrt{\ve_k}$.
Since $\mathrm{id}_{\rr^3}$ is a $C^1$-diffeomorphism, $\varPhi_{k+1}$ is also 
a diffeomorphism on $\rr^3$ if $\ve_k$ is sufficiently small.

\subsection{Taller solid cylinders and pressing operation}\label{ss_hatD}

Suppose again that $k\geq k_0$.
We first describe the shape of the curve $g_*^{\wh n_k+2}(J_k)$.
By \eqref{eqn_tang}, $Dg_*^2(\bx)$ is well approximated by 
\[
Df_0^2(\bx)=\begin{pmatrix}
-2a_1\left(x-\frac12\right)&0&a_2\\
0&a_3&0\\
a_4&0&0
\end{pmatrix}
\]
for $\bx=(x,y,z)\in \hh_{\ve_0}$.
Since $g_*^{\wh n_k}(J_k)$ is a $C^1$-arc almost parallel to the $x$-axis in $\hh_{\ve_0}$, 
$g_*^{\wh n_k+2}(J_k)$ is sufficiently $C^1$-close to a straight segment in $\bb$ passing through $\wh\bx_{k+1}$ and parallel to the $z$-axis.

Recall that $l(\widehat{\boldsymbol{x}}_{k+1})$ is the leaf of $\mathcal{L}_\infty|_{\hh_{\widehat{\underline{w}}_{k+1}}}$ containing $\widehat{\boldsymbol{x}}_{k+1}$.
Consider the solid cylinder 
\[
\wh D_{k+1}=\bigcup_{\boldsymbol{y}\in l(\widehat{\boldsymbol{x}}_{k+1})}U_{\rho_{k+1}}(\boldsymbol{y})
\]
in $\mathbb{H}_{\widehat{\underline{w}}_{k+1}}$ centered at $\wh\bx_{k+1}$.
By \eqref{eqn_xi_zeta2} and \eqref{eqn_delta_kx}, we have $\xi_{k+1}<\delta_{k+1}$  
and hence 
\[
|l^\pm(\wh \bx_{k+1})|\geq \delta_{k+1}(\wh \bx_{k+1})=\delta_{k+1}>\xi_{k+1}=|J_{k+1}^\pm|.
\]
This shows that $l(\wh\bx_{k+1})$ contains $J_{k+1}$ and 
hence $D_{k+1}$ is a sub-solid cylinder of $\wh D_{k+1}$.
See Figure \ref{f_Dhat}.
\begin{figure}[hbtp]
\centering
\scalebox{0.6}{\includegraphics[clip]{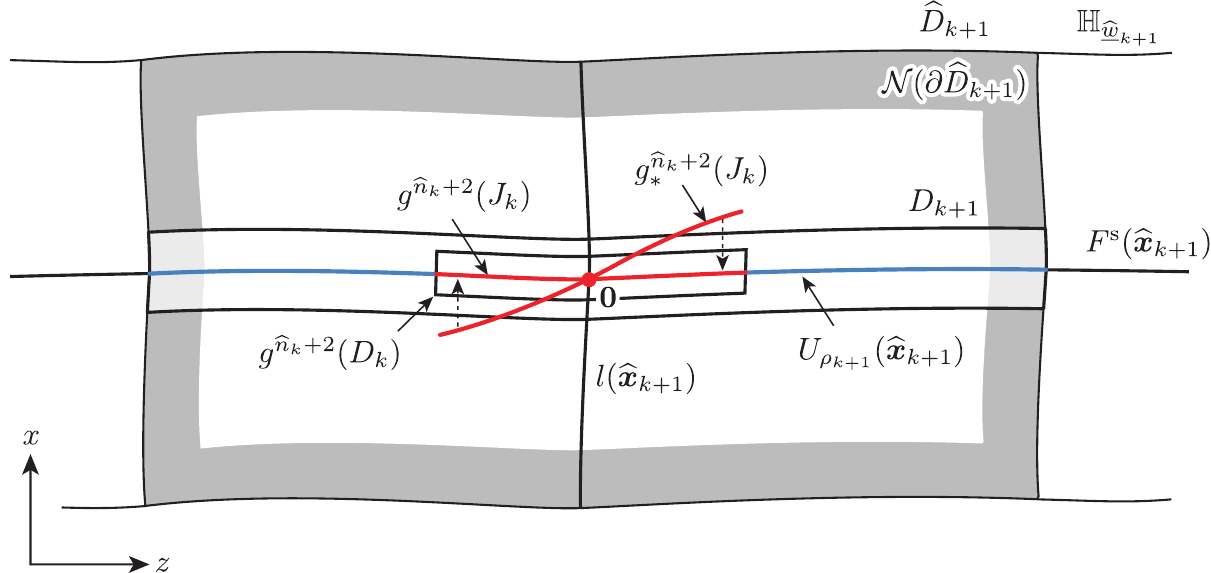}}
\caption{A general model with $\wh\bx_{k+1}=\mathbf{0}$.
}
\label{f_Dhat}
\end{figure}

To simplify representation, we temporarily use the coordinate on $\bb$ with $\wh \bx_{k+1}=(0,0,0)$ obtained by the coordinate change $\bx\longmapsto \bx-\wh\bx_{k+1}$ 
on $\bb$.
By the shape of $g_*^{\wh n_k+2}(J_k)$ and \eqref{eqn_gJ}, one can have a parametrization $(\gamma(t),\iota(t),t)$ $(-t_0\leq t\leq t_1)$ of $g_*^{\wh n_k+2}(J_k)$ with $(\gamma(0),\iota(0),0)=(0,0,0)$ 
for some $0<t_0,t_1<\rho_{k+1}/3$.
Let $\gamma_0:[-t_0, t_1]\too \rr$ be a $C^1$-function  such that 
$(\gamma_0(t),\iota(t),t)$ is contained in $U_{\rho_{k+1}}(\wh\bx_{k+1})$ for any 
$-t_0\leq t\leq t_1$.
Intuitively the curve is obtained by 
the $x$-directional orthogonal projection of $g^{\wh n_k+2}_*(J_k)$ into $U_{\rho_{k+1}}(\wh\bx_{k+1})$.
Let $\wh\gamma:[-t_0, t_1]\too \rr$ be the $C^1$-function 
defined as $\wh\gamma(t)=\gamma(t)-\gamma_0(t)$.
Since $F^{\rs}(\wh\bx_{k+1})$ is compatible with $\boldsymbol{C}_\ve^{\cs}$, 
we have by \eqref{eqn_angle_gJS} and \eqref{eqn_gammaA} 
\[
|\wh\gamma'(t)|\leq (2c_0+1)\ve_k
\]
if $\ve$ is sufficiently small and $k_0$ is sufficiently large.
Fix a $C^1$-function $\wh\eta:\rr\too \rr$ extending 
$\wh\gamma$ and satisfying 
$
|\wh\eta'(t)|\leq (2c_0+1)\ve_k
$
for any $t\in \rr$.
The condition on $\wh\eta'(t)$ corresponds to \eqref{eqn_ae*} in the product model.
The $C^1$-embedding $\wh H_{k+1}:\wh D_{k+1}\too \bb$ is 
defined as $\wh H_{k+1}(x,y,z)=(x-\wh\eta(z),y,z)$.

Let $\mathcal{N}(\partial \wh D_{k+1})$ be the $\sqrt{\ve_{k}}\rho_{k+1}$-neighborhood of $\partial \wh D_{k+1}$ in $\wh D_{k+1}$.
By \eqref{eqn_xi_zeta3}, \eqref{eqn_delta_kx} and \eqref{eqn_dist_xF}, one can suppose 
that $g_*^{\wh n_k+2}(J_k)$ is contained in $\wh D_{k+1}\setminus \mathcal{N}(\partial \wh D_{k+1})$ for any $k\geq k_0$ if necessary replacing $k_0$ with a larger positive integer.
See Figure \ref{f_Dhat} again.

By considering a bump function as \eqref{B1}--\eqref{B3} supported on $\wh D_{k+1}$, 
we have a $C^1$-diffeomorphism 
\begin{equation}\label{eqn_Psik+1}
\varPsi_{k+1}:M\too M
\end{equation}
satisfying the following conditions, where we use $\mathcal{N}(\partial \wh D_{k+1})$ instead of $\mathcal{N}(\partial \wh E_{k+1})$ in the product model.
\begin{itemize}
\setlength{\itemindent}{-16pt}
\item
$\mathrm{dist}_{C^1}(\varPsi_{k+1},\mathrm{id}_M)<7(2c_0+1)\sqrt{\ve_k}$,
\item
$\varPsi_{k+1}|_{M\setminus \wh D_{k+1}}=\mathrm{id}_{M\setminus \wh D_{k+1}}$,
\item
$\varPsi_{k+1}(g_*^{\wh n_k+2}(\wh\bx_k))=\wh\bx_{k+1}$,\quad  
$\varPsi_{k+1}(g_*^{\wh n_k+2}(J_k))\subset U_{\rho_{k+1}}(\wh\bx_{k+1})$.
\end{itemize}

\subsection{Proof of Theorem \ref{mainthm}}\label{ss_proof_thmA}

Take any element $\bx$ of $\Lambda_{f}^{(\mathrm{mj})}$ and 
set $\mathcal{I}_{f}(\bx)=(v_j)_{j\in \mathbb{Z}}$.
We  set $\alpha_1=n_0+L$ and define inductively a strictly increasing sequence $(\alpha _k)_{k\geq 1}$ of 
positive integers satisfying \eqref{D1} of Definition \ref{describable} with $\beta_k=k^2$.
Suppose that $\alpha_1,\dots,\alpha_k$  are already determined.
Then the binary code $\underline u_k$ is defined by
\begin{equation}\label{eqn_u_k}
\underline u_k=(v_{\alpha_k+1}v_{\alpha_k+2}\dots v_{\alpha_k+k^2}).
\end{equation}
By applying Lemma \ref{lem-3-1} to $\underline u_k$, 
we have finite binary codes $\ul\iota_k$ and $\ul\gamma^{(m_k)}$ 
satisfying the conditions of the lemma.
Then $\alpha_{k+1}$ is defined by 
\begin{equation}\label{eqn_alphak+1}
\alpha_{k+1}=\alpha_k+k^2+|\underline{\iota}_k|+m_k+2+n_0+L(k+1).
\end{equation}
See the thick black line segment in Figure \ref{f_wivi} below.

Let $\underline{\widehat w}_k=\underline{w}^{(n_0+Lk)}\underline{u}_k\underline{\iota}_k\underline{\gamma}^{(m_k)}$ be the binary code defined in Lemma \ref{lem-3-1}.
Suppose that $\wh\bx_k\in S_{\wh{\ul{w}}_k}^\tau\subset \bb(\wh{\ul{w}}_k)$ is  the point constructed as in Lemma \ref{l_xkinS} such that the sequence $(\wh\bx_k)_{k\geq k_0}$ determines the $C^1$-diffeomorphism $g_*:M\too M$ arbitrarily $C^1$-close to $f$, 
see Subsection \ref{ss_first perturb}.
Let $\Lambda_{g_*}^{(\mathrm{mj})}$ be the continuation of $\Lambda_{f}^{(\mathrm{mj})}$ 
and $\bx_{g_*}$ the element of $\Lambda_{g_*}^{(\mathrm{mj})}$ corresponding to $\bx$.
For any $k_0\leq j<a$, consider the composition 
$\wt \varPsi_{j+1,a}=\varPsi_a\circ\cdots\circ \varPsi_{j+1}:M\too M$.
From our construction, one can suppose that $\ve_k>0$ is arbitrarily small for any $k\geq k_0$. 
For $k_0\leq j<a<a'$, $\wt\varPsi_{j+1,a}(\bx)\neq \wt\varPsi_{j+1,a'}(\bx)$ only  if 
$\bx\in \bigcup_{\,l=a+1}^{\,a'}\wh D_l$.
It follows that
\[
\mathrm{dist}_{C^1}(\wt \varPsi_{j+1,a},\wt \varPsi_{j+1,a'})\leq 7(2c_0+1)\sqrt{\ve_a}.\]
Hence $(\wt\varPsi_{j+1,a})_{a=j+1}^\infty$ is a Cauchy sequence 
in the complete metric space $(\mathrm{Map}^1(M),\|\cdot\|_{C^1})$ of $C^1$-maps on $M$.
Thus the sequence $C^1$-converges to a $C^1$-map  
$\wt\varPsi_{j+1,\infty}:M\too M$ as $a\to \infty$.
Similarly, for $k_0\leq j<j'$, $\wt\varPsi_{j+1,\infty}(\bx)\neq \wt\varPsi_{j'+1,\infty}(\bx)$ only  if 
$\bx\in \bigcup_{\,m=j}^{\,j'-1}\wh D_m$.
So the sequence $(\wt\varPsi_{j+1,\infty})_{j\geq k_0}$ $C^1$-converges to $\mathrm{id}_M$ as $j\to \infty$.
Since $\mathrm{id}_M$ is a diffeomorphism, $\wt\varPsi_{j+1,\infty}$ is also a $C^1$-diffeomorphism for any sufficiently large $j$.
So there exists $k_1\geq k_0$ such that 
\begin{equation}\label{eqn_def_g}
g=\wt\varPsi_{k_1+1,\infty}\circ g_*:M\too M
\end{equation}
is a $C^1$-diffeomorphism arbitrarily $C^1$-close to $g_*$ and hence to $f$.

In the remainder of this paper, we prove that $g$ is a diffeomorphism 
desired in Theorem \ref{mainthm}.

We represent $\widehat{\underline{w}}_k=\underline{w}^{(n_0+Lk)}\underline{u}_k\underline{\iota}_k\underline{\gamma}^{(m_k)}$ 
as $(w_1w_2\dots w_{\widehat n_k})$.
The condition \eqref{eqn_u_k} implies  
\begin{equation}\label{eqn_wiva}
w_{n_0+Lk+i}=v_{\alpha_k+i}\quad(i=1,\dots,k^2).
\end{equation}
See Figure \ref{f_wivi}.
\begin{figure}[hbtp]
\centering
\scalebox{0.6}{\includegraphics[clip]{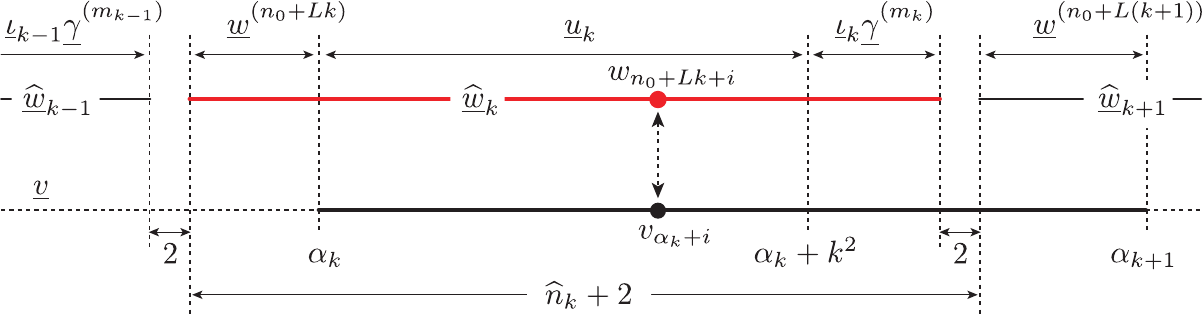}}
\caption{Recall that $|\widehat{\underline{w}}_k|=\widehat n_k$ and $g_*^{\wh n_k+2}(\wh\bx_k)=\wh \bx_{k+1}$.}
\label{f_wivi}
\end{figure}
We set $\wh\bx_1=g_*^{n_0+Lk_0-\alpha_{k_0}}(\wh\bx_{k_0})$.
Then $g_*^{\alpha_k}(\wh \bx_1)=g_*^{n_0+Lk
}(\wh \bx_k)$ holds for any $k\geq k_0$ and hence $g_*^{\alpha_k+i}(\wh \bx_1)=g_*^{n_0+Lk+i}(\wh \bx_k)$ for $i\geq 0$.
So \eqref{eqn_wiva} implies that both $g_*^{\alpha_k+i}(\bx_{g_*})$ and $g_*^{\alpha_k+i}(\wh \bx_1)$ 
are contained in $g_*^{i-1}(\bb^{\ru}(\underline{u}_k\underline{\iota}_k\underline{\gamma}^{(m_k)}))$ for $i=1,\dots,k^2$ if $k\geq k_0$.
For any $\ve>0$, we will show that there exists an integer $k_2\geq k_1$ such that 
\begin{equation}\label{eqn_gxgx}
\|g_*^{\alpha_k+i}(\wh \bx_1)-g_*^{\alpha_k+i}(\bx_{g_*})\|<\ve
\end{equation}
holds for any $k\geq k_2$ and $i$ with $k\leq i\leq k^2-k$.
In fact, by \eqref{eqn_lambda_e/2}, 
there exists a constant $c_1>0$ independent of $k$ such that, 
for any leaf $F^{\rs}$ of $\cf_f^{\rs}$ with $g_*^{i}(\bb^{\ru}(\underline{u}_k\underline{\iota}_k\underline{\gamma}^{(m_k)}))\cap F^{\rs}\neq \emptyset$, the diameter of the section $\sigma_i(F^{\rs})$ of $g_*^{i-1}(\bb^{\ru}(\underline{u}_k\underline{\iota}_k\underline{\gamma}^{(m_k)}))$ by $F^{\rs}$ 
is less than 
$c_1\ol\lambda_{\cs 1}^{\,k}$ for any $k\leq i\leq k^2$.
Moreover, by \eqref{eqn_lamfkl}, 
the height $h_i$ of $\bb^{\ru}(\underline{u}_{k(i)}\underline{\iota}_k\underline{\gamma}^{(m_k)})$ is less than $c_1\ul{\lambda}_{\ru}^{-(k+|\iota_k|+m_k)}$ for any $1\leq i\leq k^2-k$, where 
$\underline u_{k(i)}=(v_{\alpha_k+i}v_{\alpha_k+1+i}\dots v_{\alpha_k+k^2})$ and 
\[
h_i=\max\left\{\,|l|\,;\, \text{$l$ is a maximal $C^1$-arc in $\bb(\underline{u}_{k(i)}\underline{\iota}_k\underline{\gamma}^{(m_k)})$ compatible with $\boldsymbol{C}_\ve^{\ru}$}\,\right\}.
\]
See Figure \ref{f_height}.
\begin{figure}[hbtp]
\centering
\scalebox{0.6}{\includegraphics[clip]{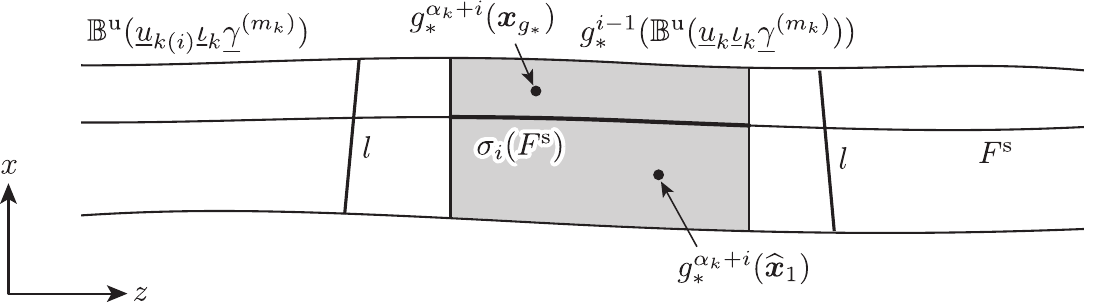}}
\caption{The diameter of the section $\sigma_i(F^{\rs})$ decreases, whereas the height of $\bb^{\ru}(\underline{u}_{k(i)}\underline{\iota}_k\underline{\gamma}^{(m_k)})$ increases  as $i\to k^2$.
}
\label{f_height}
\end{figure}
Thus \eqref{eqn_gxgx} holds if we take $k_2$ sufficiently large.
By \eqref{eqn_alphak+1}, we have $\alpha_{k+1}-\alpha_k=k^2+O(k)$ and hence
\[
\lim_{k\to\infty}\frac{k^2-k}{\alpha_{k+1}-\alpha_k}=1.
\]
This means that almost all $i$ from $1$ to $\alpha_{k+1}-\alpha_k$ satisfy \eqref{eqn_gxgx} 
for any sufficiently large $k$.
It follows that 
\begin{equation}\label{eqn_gg}
\lim _{n\to \infty} 
\frac{1}{n}\sum_{i=0}^{n-1}
\mathrm{dist}\,(g_*^i(\wh\bx_1),g_*^i(\bx_{g_*}))=0.
\end{equation}

\bigskip

Since $\wh n_k=k^2+O(k)$, 
there exists a positive integer $k_3\geq k_2$, for any $\eta$ with $0<\eta<1$, such that 
$\widehat n_{k+i}<(1+\eta)^i\widehat n_{k}$ if $k\geq k_3$ and $i\geq 1$.
Then we have 
\begin{equation}\label{eqn_3/2}
\frac{3}{2}\sum_{i=0}^{\infty}\frac{\widehat n_{k+i}}{2^{i}}
\leq 
\frac{3\widehat n_{k}}{2}\sum_{i=0}^{\infty}
\biggl(\frac{1+\eta}{2}\biggr)^{i}
=\frac{3\widehat n_{k}}{1-\eta}=(3+\eta_{1})\widehat n_{k}
\end{equation}
if $k\geq k_3$, where $\eta_{1}=3\eta/(1-\eta)$.

We denote the total number of $0$ and $1$ entries in the code $\wh{\ul\omega}_k$ by 
$\wh n_{k(0)}$ and $\wh n_{k(1)}$ respectively.
In particular, 
\[
\wh n_k=|\wh{\ul\omega}_k|=\wh n_{k(0)}+\wh n_{k(1)}.
\]
As shown in \cite[Subsection 9.1]{KNS}, 
the majority condition \eqref{eqn_majority} on $\bx_{g_*}$ 
implies the inequality
\begin{equation}\label{eqn_nk1_nk2}
\wh n_{k(1)}<(1+\eta_0)\wh n_{k(0)}
\end{equation}
for all sufficiently large $k$, where $\eta_0=4\eta/(1-\eta)$.

\medskip

To prove Theorem \ref{mainthm}, we need the following refined version of \cite[Lemma 9.3]{KNS}.

\begin{lem}\label{lem7.3}
$$
\ol\lambda_{\rm cs0}^{\,\widehat n_{k(0)}}\ol\lambda_{\rm cs1}^{\,\widehat n_{k(1)}}\rho_{k}=o(\xi_{k+1}(\ol\lambda_{\ru}^{\,-1}\ul\lambda_{\ru})^{\wh n_{k+1}}).
$$
\end{lem}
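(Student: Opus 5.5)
The plan is a direct computation. Using the recursion \eqref{eqn_xi_zeta2}, I will rewrite both sides of Lemma \ref{lem7.3} as exponentials in $\wh n_k$. The ratio then reduces to a quantity of the form $C\,\Theta^{\wh n_k}$. The last inequality of \eqref{eqn_eigen_v2} should give $\Theta<1$, once the small parameters $\eta,\eta_0,\eta_1$ are chosen small enough.

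\emph{Step 1 (eliminate $\rho_k$ and $\xi_{k+1}$).} From \eqref{eqn_xi_zeta} we have $\rho_k=\sigma^{-1}\xi_k^{1/2}$. From \eqref{eqn_xi_zeta2} we have $\xi_{k+1}=\sigma^{-1}\ol\lambda_{\ru}^{\,2\wh n_k}\xi_k^2$. Hence
\[
Q_k:=\frac{\ol\lambda_{\rm cs0}^{\,\wh n_{k(0)}}\ol\lambda_{\rm cs1}^{\,\wh n_{k(1)}}\rho_k}{\xi_{k+1}(\ol\lambda_{\ru}^{\,-1}\ul\lambda_{\ru})^{\wh n_{k+1}}}
=\ol\lambda_{\rm cs0}^{\,\wh n_{k(0)}}\ol\lambda_{\rm cs1}^{\,\wh n_{k(1)}}\,\xi_k^{-3/2}\,\ol\lambda_{\ru}^{\,-2\wh n_k}\,(\ol\lambda_{\ru}\ul\lambda_{\ru}^{-1})^{\wh n_{k+1}}.
\]
By the definition of $\xi_k$ and \eqref{eqn_3/2}, $\xi_k^{-3/2}\le\sigma^{-3/2}\ol\lambda_{\ru}^{\,(3+\eta_1)\wh n_k}$ for $k\ge k_3$. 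By the choice of $k_3$ we also have $\wh n_{k+1}<(1+\eta)\wh n_k$. Therefore
\[
Q_k\le\sigma^{-3/2}\,\ol\lambda_{\rm cs0}^{\,\wh n_{k(0)}}\ol\lambda_{\rm cs1}^{\,\wh n_{k(1)}}\,\ol\lambda_{\ru}^{\,(1+\eta_1)\wh n_k}(\ol\lambda_{\ru}\ul\lambda_{\ru}^{-1})^{(1+\eta)\wh n_k}.
\]

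\emph{Step 2 (use the majority condition).} I will write
\[
\ol\lambda_{\rm cs0}^{\,\wh n_{k(0)}}\ol\lambda_{\rm cs1}^{\,\wh n_{k(1)}}=(\ol\lambda_{\rm cs0}\ol\lambda_{\rm cs1})^{\wh n_k/2}(\ol\lambda_{\rm cs0}/\ol\lambda_{\rm cs1})^{(\wh n_{k(0)}-\wh n_{k(1)})/2}.
\]
By \eqref{eqn_nk1_nk2}, $\wh n_{k(0)}-\wh n_{k(1)}>-\eta_0\wh n_{k(0)}\ge-\eta_0\wh n_k$. Since $\ol\lambda_{\rm cs0}<\ol\lambda_{\rm cs1}$, the second factor is at most $(\ol\lambda_{\rm cs1}/\ol\lambda_{\rm cs0})^{\eta_0\wh n_k/2}$. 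Combining with Step 1 gives $Q_k\le\sigma^{-3/2}\Theta^{\wh n_k}$, where
\[
\Theta=(\ol\lambda_{\rm cs0}\ol\lambda_{\rm cs1})^{1/2}\,\ol\lambda_{\ru}\,(\ol\lambda_{\ru}\ul\lambda_{\ru}^{-1})\cdot\Bigl[\ol\lambda_{\ru}^{\,\eta_1}(\ol\lambda_{\ru}\ul\lambda_{\ru}^{-1})^{\eta}(\ol\lambda_{\rm cs1}/\ol\lambda_{\rm cs0})^{\eta_0/2}\Bigr].
\]

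\emph{Step 3 (conclude).} The square of the part of $\Theta$ outside the bracket is $\ol\lambda_{\rm cs0}\ol\lambda_{\rm cs1}\ol\lambda_{\ru}^{\,4}\ul\lambda_{\ru}^{-2}$, which is $<1$ by \eqref{eqn_eigen_v2}. The bracket tends to $1$ as $\eta\to0$, because $\eta_0,\eta_1\to0$ as well. So fixing $\eta$ small enough gives $\Theta<1$. Since $\wh n_k\to\infty$, we get $Q_k\to0$, which is the claim.

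The main obstacle I expect is the bookkeeping of the dependence on $\eta$. The constants $k_3$ and the threshold in \eqref{eqn_nk1_nk2} depend on $\eta$, so $\eta$ must be fixed first, from the strict inequality in \eqref{eqn_eigen_v2}, and only afterwards the large $k$ taken. It also has to be checked that the bound $\wh n_{k+1}<(1+\eta)\wh n_k$ is used in the correct direction: it is, because $\ol\lambda_{\ru}\ul\lambda_{\ru}^{-1}>1$.
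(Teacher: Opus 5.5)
Your argument is correct and follows the paper's proof. It uses the same reduction via \eqref{eqn_xi_zeta3}, \eqref{eqn_3/2} and $\wh n_{k+1}<(1+\eta)\wh n_k$, and then the majority inequality \eqref{eqn_nk1_nk2} and the last inequality of \eqref{eqn_eigen_v2}. The only difference is in the final step. You write $\ol\lambda_{\rm cs0}^{\,\wh n_{k(0)}}\ol\lambda_{\rm cs1}^{\,\wh n_{k(1)}}$ as a geometric mean times a ratio factor and get decay in $\wh n_k$, whereas the paper replaces $\wh n_{k(1)}$ by $(1+\eta_0)\wh n_{k(0)}$ (using that the $\mathrm{cs}1$ factor exceeds $1$) and gets decay in $\wh n_{k(0)}$, so your version avoids that extra check.
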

\begin{proof}
Since 
\[
(\ol\lambda_{\ru}\ul\lambda_{\ru}^{-1})^{\wh n_{k+1}}
<(\ol\lambda_{\ru}\ul\lambda_{\ru}^{-1})^{(1+\eta)\wh n_{k}}
<(\ol\lambda_{\ru}\ul\lambda_{\ru}^{-1})^{(1+\eta_1)\wh n_{k}},
\]
we have 
by \eqref{eqn_xi_zeta3}, \eqref{eqn_3/2} and \eqref{eqn_nk1_nk2} 
\begin{align}\label{eqn_lambda_lambda}
\frac{ \ol\lambda_{\rm cs0}^{\,\widehat n_{k(0)}}\ol\lambda_{\rm cs1}^{\,\widehat n_{k(1)}}\rho_{k}}{\xi_{k+1}(\ol\lambda_{\ru}^{\,-1}\ul\lambda_{\ru})^{\wh n_{k+1}}}
&<\ol\lambda_{\rm cs0}^{\,\widehat n_{k(0)}}\ol\lambda_{\rm cs1}^{\,\widehat n_{k(1)}} \left(\ol{\lambda}_{\rm u}^{\,-2\wh n_k}\xi_k^{-\frac32}\right)(\ol\lambda_{\ru}\ul\lambda_{\ru}^{-1})^{(1+\eta_1)\wh n_k}\nonumber\\
&=
\sigma^{-\frac{3}{2}} \ol\lambda_{\rm cs0}^{\,\widehat n_{k(0)}}\ol\lambda_{\rm cs1}^{\,\widehat n_{k(1)}} \ol{\lambda}_{\rm u}^{\,-2 \widehat n_{k}}
\biggl(\ol{\lambda}_{\rm u}^
{\,\sum_{i=0}^{\infty}\tfrac{\widehat n_{k+i}}{2^{i}}}
\biggr)^{\frac{3}{2}}(\ol\lambda_{\ru}\ul\lambda_{\ru}^{-1})^{(1+\eta_1)\wh n_k}\nonumber\\
&\leq \sigma^{-\frac{3}{2}} \ol\lambda_{\rm cs0}^{\,\widehat n_{k(0)}}\ol\lambda_{\rm cs1}^{\,\widehat n_{k(1)}} \ol{\lambda}_{\rm u}^{\,(2+2\eta_1)\widehat n_{k}}\ul\lambda_{\ru}^{-(1+\eta_1)\wh n_k}.
\end{align}
Since 
$\ol\lambda_{\rm cs0}\ol\lambda_{\rm cs1}\ol\lambda_{\ru}^{\,4}\ul\lambda_{\ru}^{-2}<1$ 
by \eqref{eqn_eigen_v2}, it follows that  
\[
\ol\lambda_{\rm cs0}\ol\lambda_{\rm cs1}^{\,(1+\eta_0)}\ol\lambda_{\mathrm{u}}^{\,(2+\eta_0)(2+2\eta_{1})}\ul\lambda_{\ru}^{-(2+\eta_0)(1+\eta_1)}<1
\]
if $\eta>0$ is sufficiently small.
On the other hand, since 
\[
\ol\lambda_{\rm cs1}\ol\lambda_{\mathrm{u}}^{\,2}\ul\lambda_{\ru}^{-1}
=\ol\lambda_{\rm cs1}\ol\lambda_{\mathrm{u}}(\ol\lambda_{\mathrm{u}}\ul\lambda_{\ru}^{-1})>\ol\lambda_{\rm cs1}\ol\lambda_{\mathrm{u}}>2\cdot\frac12=1
\]
by \eqref{eqn_eigen_v2}, 
one can also suppose that 
$
\ol\lambda_{\rm cs1}\ol\lambda_{\mathrm{u}}^{\,(2+2\eta_{1})}\ul\lambda_{\ru}^{-(1+\eta_1)}>1$.
Since $\wh n_k=\wh n_{k(0)}+\wh n_{k(1)}$, we have by \eqref{eqn_lambda_lambda} 
\begin{align*}
\frac{\ol\lambda_{\rm cs0}^{\,\widehat n_{k(0)}}\ol\lambda_{\rm cs1}^{\,\widehat n_{k(1)}}\rho_{k}}{\xi_{k+1}(\ol\lambda_{\ru}^{\,-1}\ul\lambda_{\ru})^{\wh n_{k+1}}}
&
\leq \sigma^{-\frac{3}{2}} \ol\lambda_{\rm cs0}^{\,\widehat n_{k(0)}}\ol\lambda_{\rm cs1}^{\,\widehat n_{k(1)}} \ol{\lambda}_{\rm u}^{\,(2+2\eta_1)(\widehat n_{k(0)}+\widehat n_{k(1)})}\ul\lambda_{\ru}^{-(1+\eta_1)(\widehat n_{k(0)}+\widehat n_{k(1)})}\\
&= 
\sigma^{-\frac32}
(\ol\lambda_{\rm cs0}\ol\lambda_{\mathrm{u}}^{\,(2+\eta_{1})}\ul\lambda_{\ru}^{-(1+\eta_1)})^{\widehat n_{k(0)}}(\ol\lambda_{\rm cs1}\ol\lambda_{\mathrm{u}}^{\,(2+\eta_{1})}\ul\lambda_{\ru}^{-(1+\eta_1)})^{\widehat n_{k(1)}}\\
&<
\sigma^{-\frac32}
(\ol\lambda_{\rm cs0}\ol\lambda_{\mathrm{u}}^{\,(2+2\eta_{1})}\ul\lambda_{\ru}^{-(1+\eta_1)})^{\widehat n_{k(0)}}(\ol\lambda_{\rm cs1}\ol\lambda_{\mathrm{u}}^{\,(2+\eta_{1})}\ul\lambda_{\ru}^{-(1+\eta_1)})^{(1+\eta_0)\widehat n_{k(0)}}\\
&\leq 
\sigma^{-\frac32}
\bigl(\ol\lambda_{\rm cs0}\ol\lambda_{\rm cs1}^{\,(1+\eta_0)}\ol\lambda_{\mathrm{u}}^{\,(2+\eta_0)(2+2\eta_{1})}\ul\lambda_{\ru}^{-(2+\eta_0)(1+\eta_1)}\bigr)^{\widehat n_{k(0)}}\rightarrow 0
\end{align*}
as $k\rightarrow \infty$.
This completes the proof.
\end{proof}

Now we are ready to prove Theorem \ref{mainthm}.

\begin{proof}[Proof of Theorem \ref{mainthm}]
By \eqref{eqn_gJ} and \eqref{eqn_def_g}, for any $k\geq k_1$, 
\begin{equation}\label{eqn_gJkU}
g^{\wh n_k+2}(J_k)=\wt\varPsi_{k_1+1,\infty}(g_*^{\wh n_k+2}(J_k))=
\varPsi_{k+1}(g_*^{\wh n_k+2}(J_k))\subset  U_{\rho_{k+1}}(\wh\bx_{k+1}) .
\end{equation}
Moreover, by \eqref{eqn_lambda_e/2} and Lemma \ref{lem7.3}, we have  
\begin{equation}\label{eqn_diam_gU}
\mathrm{diam}\,g_*^{\wh n_k+2}(U_{\rho_k}(\bx))
\leq 2\ol\lambda_{\rm cs0}^{\,\widehat n_{k(0)}}\ol\lambda_{\rm cs1}^{\,\widehat n_{k(1)}}\rho_{k}
=o(\xi_{k+1}(\ol\lambda_{\ru}^{\,-1}\ul\lambda_{\ru})^{\wh n_{k+1}})
\end{equation}
for any $\bx\in J_k$.
Hence, by Lemma \ref{l_xiJk} together with \eqref{eqn_gJkU} and \eqref{eqn_diam_gU}, 
one can  have an integer $j_0\geq k_1$ such that, for any $k\geq j_0$,  
\[
g^{\,\wh n_k+2}(D_k)=\wt\varPsi_{k_1+1,\infty}(g_*^{\wh n_k+2}(D_k))
=\varPsi_{k+1}(g_*^{\wh n_k+2}(D_k))\subset D_{k+1}.
\]
See Figure \ref{f_Dhat} again.
Thus $\mathrm{Int}D_{j_0}$ is a wandering domain of $g$ 
with $\mathrm{Int}D_{j_0}\ni \wh\bx_{j_0}$ and 
\begin{equation}\label{eqn_diamD}
\lim_{i\to \infty}\mathrm{diam}\,g^i(\mathrm{Int}D_{j_0})=0.
\end{equation}
So $D_g=g^{n_0+Lj_0-\alpha_{j_0}}(\mathrm{Int}D_{j_0})$ is also a wandering domain of $g$.

We set $\by_1=g^{n_0+Lk_0-\alpha_{k_0}}(\wh\bx_{k_0})$.
Since $\bx_{k+1}=g^{\wh n_k+2}(\wh\bx_k)=g^{\alpha_{k+1}-\alpha_k-L}(\wh\bx_k)$  (see Figure \ref{f_wivi}) for any $k\geq k_0$, 
\[
\wh\bx_{j_0}
=g^{\alpha_{j_0}-\alpha_{k_0}-L(j_0-k_0)}(\wh \bx_{k_0})
=g^{\alpha_{j_0}-(n_0+Lj_0)}(\wh \by_1).
\]
Since $\wh\bx_{j_0}\in \mathrm{Int} D_{j_0}$, this shows that 
\begin{equation}\label{eqn_byinD}
\wh \by_1\in D_g.
\end{equation}
From the third condition for $\varPsi_{k+1}$ of \eqref{eqn_Psik+1}, we have $g^i(\wh\bx_{k_0})= g_*^i(\wh\bx_{k_0})$ for any $i\geq 0$.
Since moreover $g^{\alpha_{k_0}-(n_0+Lk_0)}(\wh\by_1)=g_*^{\alpha_{k_0}-(n_0+Lk_0)}(\wh\bx_1)=\wh\bx_{k_0}$, we have $g^i(\wh\by_1)=g_*^i(\wh\bx_1)$ for any $i\geq \alpha_0-(n_0+Lk_0)$.
On the other hand, since $\Lambda_{g}=\Lambda_{g_*}$, we also have $g^j(\bx_{g})=g_*^j(\bx_{g_*})$ 
for any $j\geq 0$ and the continuation $\bx_{g}\in \Lambda_{g}$ of $\bx_{g_*}\in \Lambda_{g_*}$.
By this fact together with \eqref{eqn_gg},\eqref{eqn_diamD} and \eqref{eqn_byinD},
\[
\lim _{n\to \infty} 
\frac{1}{n}\sum_{i=0}^{n-1}
\sup_{\by\in D_{g}}\mathrm{dist}\,(g^i(\by), g^i(\bx_{g}))=
\frac{1}{n}\sum_{i=0}^{n-1}
\mathrm{dist}\,(g^i(\wh\by_1), g^i(\bx_{g}))=0.
\]
Since $g$ is taken to be arbitrarily $C^1$-close to $f$, 
it follows that 
$f$ is strongly pluripotent for the continuation $\Lambda_f^{(\mathrm{mj})}$ 
of $\Lambda_{f_0}^{(\mathrm{mj})}$.
This completes the proof of Theorem \ref{mainthm}.
\end{proof}

\appendix

\section{Evaluation of relative slopes}\label{S_relative}
Here we present some elementary results in differential geometry that were used in the main text.

Suppose that $F$ is a $C^1$-surface in $\bb$ compatible with $\boldsymbol{C}_\ve^{\cs}$ and $J$ is a $C^1$-curve parametrized by $\ba:[-t_0,t_1]\too \bb$ for some 
$t_0,t_1>0$ 
satisfying the following conditions.
\begin{itemize}
\setlength{\itemindent}{-16pt}
\item
$\pi_{yz}(J)\subset \pi_{yz}(F)$, where $\pi_{yz}:\bb\too \{0\}\times I_{\ve_0}^2$ is 
the orthogonal projection to the $yz$-plane.
\item
$\ba(0)=\bx_0$ is a point of $F$.
\item
$\ba(t)=(\gamma(t),\iota(t),t)$ for any $t\in [-t_0,t_1]$, where  
$\gamma(t)$ and $\iota(t)$ are $C^1$-functions.
\end{itemize}
Let $U_F=\pi_{yz}(F)$ and let $\vp:U_F\too \rr$ be the $C^1$-function such that, 
for any $(y,z)\in U_F$, $(\vp(y,z),y,z)$ is a point of $F$, in other words, 
$F$ is the graph of $\vp$.
Consider the $C^1$-diffeomorphism $\varPhi:U_F\times I_{\ve_0}^2\too \rr^3$ 
defined as $\varPhi(x,y,z)=(x-\vp(y,z),y,z)$.
Then $\varPhi(F)=U_F$ is contained in the $yz$-plane.
The curve $\wh J=\varPhi(J)$ is parametrized by 
$\wh \ba(t)=\varPhi(\ba(t))=(\wh\gamma(t),\iota(t),t)$ $(-t_0\leq t\leq t_1)$, 
where
\[
\wh\gamma(t)=\gamma(t)-\vp(\iota(t),t).
\]

Here we evaluate the slope $|\wh\gamma'(t)|$ of $\wh\gamma$ at $t$.
For any $\bx\in J$, let $\bx_1$ be the point of $F$ with $\pi_{yz}(\bx)=\pi_{yz}(\bx_1)$, 
and let $\varPhi(\bx)=\wh\bx$, $\varPhi(\bx_1)=\wh\bx_1$. 
We set 
$\theta(\bx)=\angle_{\bx}\la J,\tau_{\bx-\bx_1}(F)\ra$ and 
$\wh\theta(\wh\bx)=\angle_{\wh \bx}\la \wh J,P(\wh \bx)\ra$, 
where $P(\wh\bx)$ is a flat surface in $\rr^3$ passing through $\wh\bx$ and parallel to the $yz$-plane.
Here we consider the case that $\theta(\bx)<(c_0+1)\alpha$ for some constants $c_0>0$ and $0<\alpha<\ve$.
Since $F$ is compatible with $\boldsymbol{C}_\ve^{\cs}$, $\varPhi$ is $O(\ve)$-$C^1$-close to the identity.
It follows that $\wh\theta(\wh\bx)=(1+O(\ve))\theta(\bx)$.
Then, for any $t\in [-t_0,t_1]$, 
\begin{equation}\label{eqn_gammaA}
|\wh\gamma'(t)|=\tan\wh \theta(\wh\gamma(t))<\tan(c_0+1)(1+O(\ve))\alpha<(2c_0+1)\alpha
\end{equation}
if $\ve$ and hence $\alpha$ are sufficiently small.
Here `$2c_0$' is chosen as a positive constant strictly greater than $c_0$.
Since $\|\bx-\bx_1\|=\|\wh\bx-\wh\bx_1\|$ for any $\bx\in J$, we also have 
\begin{equation}\label{eqn_dist_xFA}
\max_{\bx\in J}\mathrm{dist}(\bx,F)<\max(t_0,t_1)(2c_0+1)\alpha.
\end{equation}

\section*{Acknowledgement}
Soma was supported by JSPS KAKENHI Grant Number 22K03342.

\end{document}